\documentclass{amsart}
\usepackage{amsmath}
\usepackage{amssymb}
\usepackage{amsthm}
\usepackage{enumerate}
\usepackage[pdftex]{graphicx}
\usepackage{caption}
\theoremstyle{definition}
\newtheorem{definition}{Definition}[section]
\theoremstyle{plain}
\newtheorem{lemma}[definition]{Lemma}
\newtheorem{theorem}[definition]{Theorem}

\newtheorem{proposition}[definition]{Proposition}
\newtheorem{corollary}[definition]{Corollary}
\theoremstyle{remark}
\newtheorem{remark}[definition]{Remark}

\makeatletter
\@namedef{subjclassname@2020}{
  \textup{2020} Mathematics Subject Classification}
\makeatother

\newcommand{\st}{\operatorname{\textbf{st}}}

\newcommand{\len}{\operatorname{len}}

\begin{document}
\title[Tame extension of almost o-minimal structure]{Tame extension of almost o-minimal structure}
\author[M. Fujita]{Masato Fujita}
\address{Department of Liberal Arts,
Japan Coast Guard Academy,
5-1 Wakaba-cho, Kure, Hiroshima 737-8512, Japan}
\email{fujita.masato.p34@kyoto-u.jp}

\begin{abstract}
We consider an almost o-minimal expansion of an ordered group $\mathcal M=(M,<,+,0,\ldots)$ and its tame extension $\mathcal N=(N,<,+,0,\ldots)$.
We demonstrate that the subset $\{x \in M^n\;|\; \mathcal N \models \Phi(x,a)\}$ of $M^n$ defined by a formula $\Phi(x,y)$ with $\mathcal M$-bounded parameters $a$ in $\mathcal N$ is $\mathcal M$-definable.
We also introduce its corollaries.
\end{abstract}

\subjclass[2020]{Primary 03C64}

\keywords{almost o-minimal structure; tame extension}

\maketitle

\section{Introduction}

The notion of tame pairs was initially introduced by Marker and Steinhorn in \cite{MS} (using different terms).
Pillay also considered the same problem in \cite{Pillay}.
It was further developed by van den Dries and Lewenberg in \cite{vdDL, vdD2}.
Using Marker and Steinhorn's result on definable types \cite{MS}, we get that the subset $\{x \in M^n\;|\; \mathcal N \models \Phi(x,a)\}$ of $M^n$ defined by a formula $\Phi(x,a)$ with parameters $a$ in $\mathcal N$ is $\mathcal M$-definable \cite[Theorem 1.1]{vdD2}.
Here, $\mathcal M=(M,<,+,0,\ldots)$ is an o-minimal structure and $\mathcal N=(N,<,+,0,\ldots)$ is its tame extension.
The corollaries of this result together with the outputs of \cite{vdDL} are discussed in \cite{vdD2}.
They are used for the study of limit sets of definable families in \cite{vdD3}.

The author introduced the notion of almost o-minimality in \cite{Fuji}, which satisfies a weaker finiteness condition than o-minimality.
It is a generalization of a locally o-minimal expansion of the set of reals and admits uniform local definable cell decomposition \cite[Theorem 1.7]{Fuji}.
We anticipate that assertions which hold true in o-minimal structures also hold true in almost o-minimal structures under reasonable additional assumptions.
In this paper, we investigate a tame extension of an almost o-minimal structure.
We demonstrate that the subset of $M^n$ defined by a formula with $\mathcal M$-bounded parameters in $\mathcal N$ is $\mathcal M$-definable for an almost o-minimal structure $\mathcal M$ and its tame extension $\mathcal N$.

Let us recall the definitions.

\begin{definition}[\cite{Fuji}]
An expansion $\mathcal M=(M,<,\ldots)$ of densely linearly ordered set without endpoints is \textit{almost o-minimal} if any bounded definable set in $M$ is a finite union of points and open intervals.
\end{definition}

\begin{definition}[\cite{vdDL}]
Let $\mathcal L$ be a language containing a predicate $<$.
Let $\mathcal M=(M,<,\ldots)$ be an $\mathcal L$-expansion of  a dense linear order without endpoints, and $\mathcal N=(N,<, \ldots)$ be its extension.
An element $y \in N$ is \textit{$\mathcal M$-bounded} if $x_1 \leq y \leq x_2$ for some $x_1,x_2 \in M$.
An $\mathcal N$-definable set is called \textit{parameterized by $\mathcal M$-bounded parameters} if there exists a finite subset $A$ of $\mathcal M$-bounded elements in $N$, and it is defined by an $\mathcal L(M \cup A)$-formula.
When the extension $\mathcal M \subseteq \mathcal N$ is elementary, we say that the extension $\mathcal M \subseteq \mathcal N$ is \textit{tame} or $\mathcal M$ is tame in $\mathcal N$ (in \cite{MS}, $\mathcal M$ is called \textit{Dedekind complete} in $\mathcal N$) if for each $\mathcal M$-bounded $y \in N$, there is an $x \in M$ such that either
\begin{enumerate}
\item[(1)] $x=y$, or
\item[(2)] $x<y$ and there are no $x' \in M$ with $x<x'<y$, or
\item[(3)] $y<x$ and there are no $x' \in M$ with $y<x'<x$.
\end{enumerate}
Such an $x$ is uniquely determined by $y$.
We call $x$ the \textit{standard part} of $y$ relative to $\mathcal M$ and write $x=\st_{\mathcal M}(y)$.
We omit the subscript $\mathcal M$ when it is clear from the context.
An $n$-tuple $(y_1, \ldots, y_n) \in N^n$ is \textit{$\mathcal M$-bounded} if each coordinate $y_i$ is $\mathcal M$-bounded.
If $\mathcal M \subseteq \mathcal N$ is a tame extension and  $(y_1, \ldots, y_n) \in N^n$ is $\mathcal M$-bounded, we put $\st_{\mathcal M}(y_1, \ldots, y_n) = (\st_{\mathcal M}(y_1),\ldots,\st_{\mathcal M}(y_n))$.
\end{definition}

Our main result is as follows:

\begin{theorem}\label{thm:main1}
Let $\mathcal L$ be a language.
Let $\mathcal M=(M,<,+,0,\ldots)$ be an almost o-minimal $\mathcal L$-expansion of an ordered group and $\mathcal M \subseteq \mathcal N=(N,<,+,0,\ldots)$ be a tame extension.
Let $a \in N^n$ be an $\mathcal M$-bounded tuple and $\Phi(x,y)$ be $\mathcal L(M)$-formula, where $x$ and $y$ are $m$-tuple and $n$-tuple of free variables, respectively.
Then, the set
$$
S=\{x \in M^m\;|\; \mathcal N \models \Phi(x,a)\}
$$ 
is $\mathcal M$-definable.
In other words, the intersection $\mathcal S \cap M^m$ of an $\mathcal N$-definable subset $\mathcal S$ of $N^m$ parameterized by $\mathcal M$-bounded parameters with $M^m$ is $\mathcal M$-definable.  
\end{theorem}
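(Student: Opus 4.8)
The plan is to argue by induction on the length $n$ of the parameter tuple $a$; write $(\star_n)$ for the assertion of the theorem for parameter tuples of length at most $n$.

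The base case $n=1$ is the template for everything that follows. Fix an $\mathcal M$-bounded $a\in N$, put $c=\st(a)\in M$, and choose $s\in M$ with $\lvert a\rvert<s$. For $x\in M^m$ set $D_x=\{y\in M:\mathcal M\models\Phi(x,y)\}$, so that $D_x^{\mathcal N}:=\{y\in N:\mathcal N\models\Phi(x,y)\}$ is the realization of $D_x$ in $\mathcal N$. Almost o-minimality makes $D_x\cap(-s,s)$ a finite union of points and open intervals whose isolated points and endpoints, being $\mathcal M$-definable, lie in $M$; this combinatorial description transfers to $\mathcal N$, so $D_x^{\mathcal N}\cap(-s,s)^{\mathcal N}$ is a finite union of points of $M$ and open intervals of $N$ with endpoints in $M$. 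If $a\in M$ the claim is trivial. Otherwise, by tameness, either $a>c$ with nothing of $M$ strictly in between, or $a<c$ likewise; in the first case $m<a\Leftrightarrow m\le c$ and $a<m\Leftrightarrow c<m$ for all $m\in M$, so $a$ can only lie in one of those open intervals, and it does so precisely when $D_x$ contains a right-neighbourhood of $c$, that is, when $\mathcal M\models\exists\varepsilon>0\,\forall y\,(c<y<c+\varepsilon\to\Phi(x,y))$; the case $a<c$ is symmetric with a left-neighbourhood. As $c\in M$, this exhibits $S$ as $\mathcal M$-definable, and the same computation carried out uniformly in an auxiliary parameter gives the refinement I will reuse: for every $\mathcal L(M)$-formula $\Psi(z,w)$ there are $\mathcal L(M)$-formulas $\Theta,\Theta^{+},\Theta^{-}$, depending only on $\Psi$, such that for $\mathcal M$-bounded $b\in N$ the set $\{z\in M^{k}:\mathcal N\models\Psi(z,b)\}$ is defined by $\Theta(z,\st b)$, $\Theta^{+}(z,\st b)$ or $\Theta^{-}(z,\st b)$ according as $b=\st b$, $\st b<b$, or $b<\st b$.

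For the inductive step I would first make several harmless reductions. Using the group operation, translate so that $\st(a)=0$; substitute $y_i=0$ in $\Phi$ for the coordinates $i$ with $a_i=0$, and $y_i\mapsto-y_i$ for those with $a_i<0$; after relabelling, every $a_i$ is a positive infinitesimal (and if none are left, $a\in M^n$ and the assertion is immediate). Fix $s\in M$ with $a\in((0,s)^n)^{\mathcal N}$; membership of $a$ in $D_x^{\mathcal N}$ is unchanged on replacing $D_x$ by the bounded definable set $D_x\cap(0,s)^n$. Applying uniform local definable cell decomposition \cite[Theorem~1.7]{Fuji} to the family $\{D_x\cap(0,s)^n\}_x$ — or, in the form I find most convenient, to the family of last-coordinate slices $\{\,y_n:(y_{<n},y_n)\in D_x\,\}\cap(0,s)$ over $(x,y_{<n})\in M^{m+n-1}$ — one gets a finite $\mathcal M$-definable partition of the relevant parameter space on each piece of which these slices have a constant combinatorial type with endpoint functions $e_1,\dots,e_N$ that are $\mathcal M$-definable in $(x,y_{<n})$. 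Transferring the description to $\mathcal N$: once one knows which partition piece contains $(x,a_{<n})$, the statement $a\in D_x^{\mathcal N}$ becomes a fixed Boolean combination of the relations ``$a_n=e_j^{\mathcal N}(x,a_{<n})$'' and ``$e_j^{\mathcal N}(x,a_{<n})<a_n$''. Which piece contains $(x,a_{<n})$ is an $\mathcal L(M)$-condition on $x$ by the induction hypothesis $(\star_{n-1})$ applied to the defining formula of that piece; so the whole problem comes down to deciding those two kinds of relation $\mathcal M$-definably in $x$.

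For that, observe that $\st(e_j^{\mathcal N}(x,a_{<n}))$ and the side of $e_j^{\mathcal N}(x,a_{<n})$ relative to its standard part (whether it equals, lies just above, or lies just below that standard part) are $\mathcal M$-definable functions of $x$: applying $(\star_{n-1})$ to ``$r<e_j(x,z)$'' in the variables $(x,r)$ with parameters $z=a_{<n}$ renders $\{(x,r):r<e_j^{\mathcal N}(x,a_{<n})\}$ $\mathcal M$-definable, and its section over a given $x$ is a bounded $\mathcal M$-definable subset of $M$ whose supremum is $\st(e_j^{\mathcal N}(x,a_{<n}))$; the side is read off similarly. Comparing these with $\st(a_n)=0$ settles $\mathcal M$-definably every instance of ``$e_j^{\mathcal N}(x,a_{<n})<a_n$'' and ``$a_n=e_j^{\mathcal N}(x,a_{<n})$'' except on the $\mathcal M$-definable set of $x$ where $e_j^{\mathcal N}(x,a_{<n})$ is itself a positive infinitesimal. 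On that set one is left with the genuinely hard task — and this is the step I expect to be the main obstacle — of deciding, $\mathcal M$-definably in $x$, the order relation between the fixed positive infinitesimal $a_n$ and a positive infinitesimal of the form $e^{\mathcal N}(x,a_{<n})$ with $e$ continuous and $\mathcal M$-definable. The difficulty is substantive: over $M$ the value $e^{\mathcal N}(x,a_{<n})$ behaves, for every such $x$, exactly like a generic positive infinitesimal, so probing the inequality by substituting elements of $M$ for the last variable recovers only $\st(e^{\mathcal N}(x,a_{<n}))=0$ and nothing about where $a_n$ sits, and the conclusion of $(\star_1)$, valid for tuples from $M$, need not survive when some coordinates are taken $\mathcal M$-bounded in $N$, so one cannot simply peel $a_n$ off last. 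The route I would take is to use that, by $(\star_{n-1})$, the type $\operatorname{tp}_{\mathcal N}(a_{<n}/M)$ is definable, and then — crucially exploiting that $\mathcal M$ expands an ordered group, so that the infinitesimals of $\operatorname{dcl}_{\mathcal N}(M\cup\{a_{<n}\})$ form a convex subgroup whose internal structure over the $M$-definable data is essentially $\mathbb Z$-affine — to show that the cut realized by $a_n$ over $\operatorname{dcl}_{\mathcal N}(M\cup\{a_{<n}\})$ is pinned down by finitely many cuts over $M$-definable quantities, which one then evaluates with the $n=1$ computation. Carrying this out, together with checking that ``uniform local'' cell decomposition indeed supplies the bounded uniformity that keeps all the combinatorial data finite unions of points and intervals after base change to $\mathcal N$, is where the real work lies; the remainder is bookkeeping.
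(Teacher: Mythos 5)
Your base case and the preliminary reductions (translating so that $\st(a)=0$, discarding coordinates lying in $M$, cutting down to a bounded box, and invoking Theorem \ref{thm:uldcd} to describe the last-coordinate slices by finitely many $\mathcal M$-definable endpoint functions $e_j$) are sound and run parallel to the reductions in the paper. But the proposal has a genuine gap, and it is exactly the one you flag yourself: deciding, $\mathcal M$-definably in $x$, the relations $e_j^{\mathcal N}(x,a_{<n})<a_n$ and $e_j^{\mathcal N}(x,a_{<n})=a_n$ on the set of $x$ where $e_j^{\mathcal N}(x,a_{<n})$ and $a_n$ are both positive infinitesimals. For this you offer only a sketch, and the sketch does not work as stated: the claim that the infinitesimal elements of $\operatorname{dcl}_{\mathcal N}(M\cup\{a_{<n}\})$ carry an ``essentially $\mathbb Z$-affine'' structure over the $M$-definable data has no justification (the language may add arbitrary definable functions beyond the group operations), and the claim that the cut of $a_n$ over $\operatorname{dcl}_{\mathcal N}(M\cup\{a_{<n}\})$ is pinned down by finitely many cuts over $M$-definable quantities is, up to bookkeeping, precisely the definability of $\operatorname{tp}_{\mathcal N}(a/M)$, i.e.\ the theorem you are trying to prove; as described, the route is circular rather than merely incomplete.

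The paper's proof consists essentially of solving this comparison problem, and it does so by a mechanism absent from your proposal. It proves the theorem simultaneously with Lemma \ref{lem:main1}, by induction on $(\len(a),\dim T)$ rather than on $\len(a)$ alone: after reducing (Claims 1 and 2) to the case where $a$ satisfies no $\mathcal L(M)$-formula defining a set of dimension $<n$ (your reductions give something weaker), the problem becomes the $\mathcal M$-definability of $\{t\in T: \alpha\in (C^{\mathcal N})_t,\ f(t,\alpha)<\beta\}$ for a bounded $\mathcal M$-definable $f$. When $\dim T=0$ this is handled by the supremum map $\lambda(x,y)=\sup\{f(t,x): f(t,x)<y\}$, a definable section $\tau$ of the associated set $Z$, and continuity of $\tau$ off a lower-dimensional set: genericity of $a$ forces $a$ into a cell on which $\tau$ is constant, so the comparison with $\beta$ is replaced by the comparison $f(t,\alpha)\leq f(u,\alpha)$ for a single $u\in M^m$, which is $\mathcal M$-definable by the induction hypothesis on $\len(a)$ applied to $\alpha$. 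When $\dim T>0$ the paper reduces, via multi-cells (Theorem \ref{thm:multi-cell}) and strong local monotonicity (Proposition \ref{prop:dim}(1)), to $f$ strictly increasing and continuous in one distinguished variable, introduces the unique solution $\delta(u,v)\in N$ of $f(u,\cdot,\alpha)=\beta$ on each maximal interval, and then proves by a dimension-counting argument (the sets $L(u,v)$ and $\Lambda$, the dichotomy of Cases A and B, Proposition \ref{prop:dim}(8)) that the composition $\st\circ\delta$ is $\mathcal M$-definable, which settles the infinitesimal-versus-infinitesimal comparison. Some argument of this kind, supplying the definability of the relevant cut rather than assuming it, is what your inductive step still needs in order to close.
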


This paper is organized as follows:
We use several results obtained in previous studies.
We recall them in Section \ref{sec:preliminary}.
Section \ref{sec:proof} is the main body of this paper, and it is devoted to the proof of the theorem.
We introduce the corollaries of the theorem in Section \ref{sec:corollaries}.

In the last of this section, we summarize the terms and notations used in this paper.
When $\mathcal M \subseteq \mathcal N$ is an elementary extension and $S$ is an $\mathcal M$-definable set, the notation $S^{\mathcal N}$ denotes the $\mathcal N$-definable subset defined by the formula defining the $\mathcal M$-definable set $S$.
Since $\mathcal M \subseteq \mathcal N$ is an elementary extension, $S^{\mathcal N}$ is independent of the choice of the formula $\phi$.

When a first-order structure in consideration is clear from the context, the term `definable' means `definable in the structure with parameters.'
We call it $\mathcal M$-definable when we emphasize the structure $\mathcal M$. 
The notation $f|_A$ denotes the restriction of a map $f:X \rightarrow Y$ to a subset $A$ of $X$.
Consider a linearly ordered set without endpoints $(M,<)$.
An open interval is a nonempty set of the form $\{x \in M\;|\; a < x < b\}$ for some $a,b \in M \cup \{\pm \infty\}$.
It is denoted by $(a,b)$ in this paper.
An open box is the Cartesian product of open intervals.
The closed interval is defined similarly and denoted by $[a,b]$.
When an expansion $\mathcal M=(M,<,\ldots)$ of a dense linear order without endpoints is given, the set $M$ equips the order topology induced from the order $<$. 
The space $M^n$ equips the product topology of the order topology.
We consider these topologies.
The space $M^0$ is a singleton with the trivial topology.

\section{Preliminary}\label{sec:preliminary}
We recall the results in \cite{Fuji} and \cite{Fuji4} in this section.
We first recall the definition of dimension of a set definable in a structure.
\begin{definition}[\cite{Fuji4}]
Consider an expansion of a densely linearly order without endpoints $\mathcal M=(M,<,\ldots)$.
Let $X$ be a nonempty definable subset of $M^n$.
The dimension of $X$ is the maximal nonnegative integer $d$ such that $\pi(X)$ has a nonempty interior for some coordinate projection $\pi:M^n \rightarrow M^d$.
We set $\dim(X)=-\infty$ when $X$ is an empty set.
\end{definition}

We also need the following definition:
\begin{definition}[Local monotonicity]
	A function $f$ defined on an open interval $I$ is \textit{locally constant} if, for any $x \in I$, there exists an open interval $J$ such that $x \in J \subseteq I$ and the restriction $f|_J$ of $f$ to $J$ is constant.
	A function $f$ defined on an open interval $I$ is \textit{locally strictly increasing} if, for any $x \in I$, there exists an open interval $J$ such that $x \in J \subseteq I$ and $f$ is strictly increasing on the interval $J$.
	We define a \textit{locally strictly decreasing} function similarly. 
\end{definition}

The author developed the dimension theory for sets definable in a definably complete locally o-minimal structure satisfying the property (a) which is defined in \cite[Definition 1.1]{Fuji4}.
We do not give the definitions of definably complete structures, locally structures and property (a) here.
Their definitions and their references are found in \cite{Fuji4}.
The important fact is that an almost o-minimal expansion of an ordered group is a definably complete locally o-minimal structure satisfying the property (a) thanks to \cite[Corollary 2.12, Lemma 4.6]{Fuji} and \cite[Proposition 2.13]{Fuji4}.
Using this fact, we get the following proposition:

\begin{proposition}\label{prop:dim}
Let $\mathcal M=(M,<,+,0,\ldots)$ be an almost o-minimal expansion of an ordered group.
The following assertions hold true:
\begin{enumerate}
\item[(1)] (Strong local monotonicity) Let $I$ be an interval and $f:I \rightarrow M$ be a definable function.
There exists a mutually disjoint definable partition $I=X_d \cup X_c \cup X_+ \cup X_-$ satisfying the following conditions:
\begin{enumerate}
\item[(i)] the definable set $X_d$ is discrete and closed;
\item[(ii)] the definable set $X_c$ is open and $f$ is locally constant on $X_c$;
\item[(iii)] the definable set $X_+$ is open and $f$ is locally strictly increasing and continuous on $X_+$;
\item[(iv)] the definable set $X_-$ is open and $f$ is locally strictly decreasing and continuous on $X_-$.
\end{enumerate}
\item[(2)] Let $X_1$ and $X_2$ be definable subsets of $M^m$.
Set $X=X_1 \cup X_2$.
Assume that $X$ has a nonempty interior.
At least one of $X_1$ and $X_2$ has a nonempty interior.
\item[(3)] A definable set is of dimension zero if and only if it is discrete.
When it is of dimension zero, it is also closed.
\item[(4)] Let $X$ and $Y$ be definable subsets of $M^n$.
We have 
\begin{align*}
\dim(X \cup Y)=\max\{\dim(X),\dim(Y)\}\text{.}
\end{align*}
\item[(5)] Let $f:X \rightarrow M^n$ be a definable map. 
We have $\dim(f(X)) \leq \dim X$.
\item[(6)] Let $f:X \rightarrow M^n$ be a definable map. 
The notation $\mathcal D(f)$ denotes the set of points at which the map $f$ is discontinuous. 
The inequality $\dim(\mathcal D(f)) < \dim X$ holds true.
\item[(7)] Let $X$ be a definable set.
The notation $\partial X$ denotes the frontier of $X$ defined by $\partial X = \overline{X} \setminus X$.
We have $\dim \partial X < \dim X$.
\item[(8)] Let $\varphi:X \rightarrow Y$ be a definable surjective map whose fibers are equi-dimensional; that is, the dimensions of the fibers $\varphi^{-1}(y)$ are constant.
The equalities $\dim X = \dim Y + \dim \varphi^{-1}(y)$ hold true for all $y \in Y$.  
\item[(9)] Let $X$ be a definable subset of $M^{m+n}$.
The notation $\pi:M^{m+n} \rightarrow M^n$ denotes the projection onto the last $n$ coordinates.
There exists a definable map $\varphi:\pi(X) \rightarrow X$ such that the composition $\pi \circ \varphi$ is the identity map on $\pi(X)$.
\end{enumerate}
\end{proposition}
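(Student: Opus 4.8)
The plan is to lean on the fact, just recorded, that an almost o-minimal expansion of an ordered group is a definably complete locally o-minimal structure satisfying property (a). The full dimension theory of \cite{Fuji4} is developed precisely for this class, so the bulk of the nine assertions should reduce to citations once I have checked that the projection-based dimension of the Definition above is exactly the one used there. Under that identification I expect assertions (4), (5), (7) and (8) to be the standard additivity, monotonicity-under-maps, frontier, and fibre-dimension theorems of \cite{Fuji4}, and (6), that a definable map is continuous off a set of strictly smaller dimension, to be equally standard there. For each of these my only task is to match hypotheses and quote.

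Two items I would instead prove directly from almost o-minimality. For (3), a definable $X \subseteq M^n$ has dimension $0$ exactly when no coordinate projection of $X$ has nonempty interior; using that every bounded definable subset of $M$ is a finite union of points and open intervals, an induction on $n$ shows such an $X$ has no accumulation point, hence is discrete and closed, and the converse is clear, yielding both the characterization and the closedness. For (1), I would begin from the local monotonicity available in the locally o-minimal setting, namely that each point of $I$ has a neighbourhood on which $f$ is constant, strictly increasing, strictly decreasing, or exceptional, with the exceptional points forming a discrete set, then collect the three good behaviours into the open pieces $X_c$, $X_+$, $X_-$ and the exceptional points into the discrete closed set $X_d$; continuity on $X_+$ and $X_-$ follows since a locally strictly monotone definable function is locally continuous here. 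Assertion (2) I would deduce rather than cite: were neither $X_1$ nor $X_2$ to have nonempty interior, each would have dimension $<m$, so by (4) their union would too, contradicting $\dim X = m$. Finally (9) is the definable-section statement; I expect it is available in \cite{Fuji4}, and otherwise the order and group structure let me select a canonical point in each fibre uniformly in the parameters.

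The main obstacle I anticipate is not any single assertion but the initial bookkeeping: confirming that the dimension used implicitly throughout (2)--(9) coincides with the Definition's projection dimension and with the dimension function of \cite{Fuji4}. Once that is pinned down, (4)--(8) are citations and (2) is a one-line consequence of (4); the only places demanding genuine work are (1), where care is needed in passing from local to global monotone behaviour and in establishing continuity, and (9), should the definable section not already be on record, where a uniform choice must be extracted from the ordered group structure.
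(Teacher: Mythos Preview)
Your plan is essentially the paper's: observe that an almost o-minimal expansion of an ordered group is a definably complete locally o-minimal structure with property~(a), and then pull the assertions from \cite{Fuji4}. The paper does exactly this, but more uniformly than you propose: it cites \cite{Fuji4} not only for (4)--(8) but also for (1) (Theorem~2.11(ii) there), for (2) (Theorem~2.11(iii)), and for (3) (Proposition~3.2), and declares (9) well-known. So your direct arguments for (1), (2), and (3) are unnecessary detours; in particular your derivation of (2) from (4) is valid but circular in spirit, since in \cite{Fuji4} the interior statement (2) is one of the basic ingredients feeding into the dimension theory that yields (4). Your sketches for (1) and (3) are plausible but would need more care than you indicate (for (3), almost o-minimality only controls \emph{bounded} definable subsets of $M$, so the passage from ``all one-dimensional projections have empty interior'' to ``discrete and closed'' is not quite the one-line induction you suggest), and that care is exactly what \cite{Fuji4} already supplies.
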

\begin{proof}
(1) \cite[Theorem 2.11(ii)]{Fuji4};
(2) \cite[Theorem 2.11(iii)]{Fuji4};
(3) \cite[Proposition 3.2]{Fuji4};
(4)-(7) \cite[Theorem 3.8(4)-(7)]{Fuji4};
(8) \cite[Theorem 3.14]{Fuji4};
(9) Well-known.
\end{proof}

The following lemma asserts that an almost o-minimal structure $\mathcal M$ has an o-minimal structure $\mathcal R$ such that any bounded $\mathcal M$-definable set is $\mathcal R$-definable.  
\begin{lemma}\label{lem:included}
Let $\mathcal M=(M,<,+,0,\ldots)$ be an almost o-minimal expansion of an ordered group.
There exists an o-minimal expansion $\mathcal R= (M,<,+,0,\ldots)$ of the ordered group satisfying the following conditions:
\begin{enumerate}
\item[(i)] Any set definable in $\mathcal R$ is $\mathcal M$-definable.
\item[(ii)] Any bounded $\mathcal M$-definable set is definable in $\mathcal R$.
\end{enumerate}
\end{lemma}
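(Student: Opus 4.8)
The plan is to take for $\mathcal{R}$ the structure on $M$ generated by all bounded $\mathcal{M}$-definable sets. Precisely, let $\mathcal{L}_0 \supseteq \{<,+,0\}$ be the language obtained by adjoining an $n$-ary relation symbol $R_D$ for every bounded $\mathcal{M}$-definable set $D \subseteq M^n$ (for every $n$), and let $\mathcal{R}$ be the $\mathcal{L}_0$-structure on $M$ interpreting $R_D$ as $D$ and $<,+,0$ as in $\mathcal{M}$. Conditions (i) and (ii) are then immediate: (i) because $\mathcal{M}$ is also an $\mathcal{L}_0$-structure with the same interpretations, so any $\mathcal{L}_0(M)$-formula defines an $\mathcal{M}$-definable set; (ii) because each bounded $\mathcal{M}$-definable $D$ is the set defined by the atom $R_D$. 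So the entire content is the o-minimality of $\mathcal{R}$. Since a single formula mentions only finitely many symbols $R_{D_1},\dots,R_{D_k}$, and since a finite list of bounded $\mathcal{M}$-definable sets can be encoded as a single one (place them, suitably padded with $0$'s and tagged by distinct coordinates, into disjoint parts of one box), it suffices to prove: for every bounded $\mathcal{M}$-definable $D \subseteq M^l$, every subset $X$ of $M$ definable with parameters in $(M,+,0,<,D)$ is a finite union of points and intervals.

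Fix such an $X$. It is in particular $\mathcal{M}$-definable, so by almost o-minimality $X \cap [-c,c]$ is a finite union of points and open intervals for every $c\in M$; in particular the boundary $\operatorname{bd}(X)=\overline{X}\setminus\myint(X)$ has empty interior, hence by Proposition~\ref{prop:dim}(3) it is discrete and closed. Since $X$ is a finite union of points and intervals if and only if $\operatorname{bd}(X)$ is finite, and a bounded discrete set is finite, it is enough to show that $X$ is \emph{eventually constant at $\pm\infty$}: there is $c$ with $(c,+\infty)\subseteq X$ or $(c,+\infty)\cap X=\emptyset$, and likewise at $-\infty$. As a warm-up, the case $l=1$ is trivial: almost o-minimality forces $(M,+,0,<)$ to be divisible (otherwise some proper dense subgroup $nM$ would give a bounded definable set, $nM\cap[0,e]$ with $e>0$, that is neither a finite union of points and intervals), hence o-minimal; and a bounded $\mathcal{M}$-definable $D\subseteq M$ is a finite union of points and open intervals, so $(M,+,0,<,D)$ merely adds finitely many constants (the endpoints of $D$) and is o-minimal. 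The work is entirely in the case $l\ge 2$, where $D$ need not be semilinear.

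To prove eventual constancy in general, the plan is to pass to an $|M|^+$-saturated elementary extension $\mathcal{M}'\succeq\mathcal{M}$; then $(M',+,0,<,D')\succeq(M,+,0,<,D)$, where $D'=D^{\mathcal{M}'}$ is still contained in the same box $[-c,c]^l$. If $X$ were not eventually constant at $+\infty$, then both $X$ and $M\setminus X$ are cofinal in $M$, so by saturation there are $a,b\in M'$ with $a,b$ greater than every element of $M$, $a\in X'$ and $b\notin X'$. The key claim is that \emph{all elements of $M'$ greater than every element of $M$ realize the same type over $M$ in the structure $(M',+,0,<,D')$}; this contradicts $a\in X'$, $b\notin X'$ (membership in $X'$ being given by an $\{<,+,0,R_{D'}\}(M)$-formula), and the case $-\infty$ is symmetric, so $X$ is a finite union of points and intervals and $\mathcal{R}$ is o-minimal. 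The intuition behind the claim is that $D'$ is confined to $[-c,c]^l$, hence to the convex subgroup $\mathrm{Fin}\subseteq M'$ of $M$-bounded elements: in an atom $R_{D'}(\tau_1,\dots,\tau_l)$, any term $\tau_j$ that genuinely involves an element above $M$ and is not cancelled back into $\mathrm{Fin}$ by other infinite quantified variables lands outside $[-c,c]$ and makes the atom false; so above $M$ the structure $(M',+,0,<,D')$ ``looks like'' the divisible ordered abelian group $(M',+,0,<)$, in which, by quantifier elimination, all elements above $M$ do have one and the same type over $M$.

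The main obstacle is making this last claim rigorous, i.e.\ controlling the quantified variables: a witness of a formula at an infinite point $a$ may itself be an infinite element clustered near a rational multiple of $a$ (so that some difference falls into $[-c,c]$ and a $D'$-atom becomes nontrivial), and one must show that every such witness configuration over $a$ can be transported to one over $b$. I expect to handle this by a back-and-forth over $M$ in which the finitely many relevant definable families — the fibrewise structure of $D$ and of the subformulas of $\theta$ along the ``$a$-direction'' — are brought to a uniform normal form by the uniform local definable cell decomposition of $\mathcal{M}$ (\cite[Theorem~1.7]{Fuji}); this is the point at which almost o-minimality, rather than mere local o-minimality, is genuinely used. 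A clean way to package the claim would be a quantifier-simplification statement: every $\mathcal{L}_0(M)$-formula $\theta(v)$ is, on the set of elements of $M'$ greater than all of $M$, equivalent to an $\{<,+,0\}(M)$-formula.
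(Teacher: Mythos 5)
Your reductions (coding finitely many bounded definable sets into one, reducing o-minimality of $\mathcal R$ to the statement that every subset of $M$ definable in $(M,+,0,<,D)$ with $D$ bounded and $\mathcal M$-definable is a finite union of points and intervals, and then, via almost o-minimality of $\mathcal M$, to eventual constancy of such a set near $\pm\infty$) are sound. But the proof stops exactly where the real content begins: the claim that in an $|M|^+$-saturated extension all elements lying above $M$ realize the same type over $M$ in $(M',+,0,<,D')$ --- equivalently, your proposed quantifier-simplification statement that every $\{<,+,0,R_D\}(M)$-formula in one variable is, above $M$, equivalent to an $\{<,+,0\}(M)$-formula --- is never proved. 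You yourself flag it as ``the main obstacle'' and only describe an expectation that a back-and-forth argument organized by uniform local definable cell decomposition will work. The bounded-window observation ($D'\subseteq[-c,c]^l$, so any term genuinely involving an infinite element falsifies a $D'$-atom unless it is cancelled back into the finite part) disposes only of quantifier-free formulas; the whole difficulty is the quantified witnesses, whose infinite coordinates may sit at bounded distance from integer combinations of the free variable and of each other, and transporting such configurations from $a$ to $b$ while preserving order relations to $M$-parameters and all $D'$-atoms is precisely the nontrivial step. As written, the argument for $l\ge 2$ is therefore a plan, not a proof.

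For calibration: the paper does not prove this lemma either; it is quoted from \cite[Theorem 2.13]{Fuji}, so what you are attempting is a self-contained re-proof of that external theorem, and the attempt halts at its crux. Two smaller points: the divisibility argument in your $l=1$ warm-up is shaky as stated ($nM$ need not a priori be dense in an arbitrary densely ordered group; divisibility here is itself a consequence of definable completeness/almost o-minimality established in \cite{Fuji}, so it should be cited or proved rather than waved at), and the phrase ``tagged by distinct coordinates, into disjoint parts of one box'' deserves an explicit construction showing each $D_i$ is recovered definably from the coded set, though that is routine.
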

\begin{proof}
\cite[Theorem 2.13]{Fuji}
\end{proof}

Almost o-minimal expansions of ordered groups admit partition into multi-cells and uniform local definable cell decomposition.
We first recall the definition of semi-definability.
\begin{definition}
	Let $\mathcal R=(M,<,\ldots)$ be an o-minimal structure.
	A subset $X$ of $M^n$ is \textit{semi-definable in $\mathcal R$} if the intersection $U \cap X$ is definable in $\mathcal R$ for any bounded open box $U$ in $M^n$. 
	
	A semi-definable subset $X$ of $M^n$ is \textit{semi-definably connected} if there are no non-empty proper semi-definable closed and open subsets $Y_1$ and $Y_2$ of $X$ such that $Y_1 \cap Y_2 = \emptyset$ and $X=Y_1 \cup Y_2$.
	For any $x \in X$, there exists a maximal semi-definably connected semi-definable subset $Y$ of $X$ containing the point $x$ by \cite[Theorem 3.6]{Fuji}.
	The set $Y$ is called the semi-definably connected component of $X$ containing the point $x$.
\end{definition}
Let $\mathcal M=(M,<,+,0,\ldots)$ be an almost o-minimal expansion of an ordered group.
Let $\mathcal R$ be the o-minimal structure given in Lemma \ref{lem:included}.
Any set definable in $\mathcal M$ is simultaneously semi-definable in $\mathcal R$.

We next recall the definitions of multi-cells given in \cite[Definition 4.19]{Fuji}.
\begin{definition}
Consider an almost o-minimal expansion of an ordered group $\mathcal M=(M,<,0,+,\ldots)$.
Let $n$ be a positive integer.
A definable subset $X$ of $M^n$ is a \textit{multi-cell} if it satisfies the following conditions:
\begin{itemize}
\item If $n=1$, either $X$ is a discrete definable set or all semi-definably connected components of the definable set $X$ are open intervals. 
\item When $n>1$, let $\pi:M^n \rightarrow M^{n-1}$ be the projection forgetting the last coordinate.
The projection image $\pi(X)$ is a multi-cell and, for any semi-definably connected component $Y$ of $X$, $\pi(Y)$ is a semi-definably connected component of $\pi(X)$ and $Y$ is one of the following forms:
\begin{align*}
Y&=\pi(Y) \times M  \text{,}\\
Y&=\{(x,y) \in \pi(Y) \times M \;|\; y=f(x)\} \text{,}\\
Y &= \{(x,y) \in \pi(Y) \times M \;|\; y>f(x)\} \text{,}\\
Y &= \{(x,y) \in \pi(Y) \times M \;|\; y<g(x)\} \text{ and }\\
Y &= \{(x,y) \in \pi(Y) \times M \;|\; f(x)<y<g(x)\}
\end{align*}
for some semi-definable continuous functions $f$ and $g$ defined on $\pi(Y)$ with $f<g$.
\end{itemize} 
\end{definition}

We obtain the following theorem:
\begin{theorem}\label{thm:multi-cell}
A set definable in an almost o-minimal expansion of an ordered group is partitioned into finitely many multi-cells.
\end{theorem}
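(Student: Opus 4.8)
The plan is to argue by induction on $n$, where $X \subseteq M^n$ is the given definable set, reducing the inductive step to two sub-problems that are controlled by the dimension theory of Proposition \ref{prop:dim} and by the o-minimal structure $\mathcal R$ of Lemma \ref{lem:included}.

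For $n=1$, I would note that $\myint(X)$ is a definable open set, since the structure is definably complete, and that every semi-definably connected component of a definable open subset of $M$ is an open interval; hence $\myint(X)$ is a multi-cell. By Proposition \ref{prop:dim}(7) the frontier $\partial(\myint(X))$ has dimension $0$, so it is discrete and closed by Proposition \ref{prop:dim}(3). Using local o-minimality, any point of $X \setminus \myint(X)$ is either isolated in $X$ or an endpoint of an interval component of $X$, and in the latter case it lies in $\partial(\myint(X))$; thus $X \setminus \myint(X)$ is a subset of the union of two discrete definable sets, which is discrete by Proposition \ref{prop:dim}(4),(3), so $X \setminus \myint(X)$ is a discrete multi-cell. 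This gives the partition $X = \myint(X) \sqcup (X \setminus \myint(X))$.

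For $n>1$, let $\pi \colon M^n \to M^{n-1}$ forget the last coordinate and write $X_y = \{t \in M : (y,t) \in X\}$ for the fibre over $y$. Applying the inductive hypothesis to $\pi(X)$ and pulling the resulting partition back, I may assume $\pi(X)$ is a single multi-cell $C$; partitioning $C$ further by the value of $\dim X_y$ (the two loci are definable) and re-partitioning into multi-cells by the inductive hypothesis, I may also assume $\dim X_y$ is constant along $C$. Set $W = \{(y,t) \in X : t \in \myint(X_y)\}$ and $Z = X \setminus W$. Every fibre of $\pi|_Z$ is discrete by the case $n=1$, so $\dim Z \le \dim C \le n-1$ by Proposition \ref{prop:dim}(8), and $Z$ is exactly the sort of lower-dimensional set that should be cut into multi-cells of graph and point type. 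I would obtain this from the inductive hypothesis together with Proposition \ref{prop:dim}(6),(7): after passing to a coordinate projection along which $Z$ has discrete fibres, decomposing its image into multi-cells, and discarding the lower-dimensional loci where the finitely-many-per-bounded-box branches of $Z$ collide or fail to be continuous, each surviving branch becomes a semi-definable continuous section, that is, a graph-type multi-cell.

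The main work will be the analysis of $W$. For $y \in C$ the fibre $W_y = \myint(X_y)$ is open, and by Lemma \ref{lem:included} the intersection of $W$ with any bounded box is definable in $\mathcal R$, so the fibrewise endpoints of the interval components of $W_y$ are, locally, the graphs of finitely many $\mathcal R$-definable functions; globally they form a definable set $B \subseteq M^n$ with $\dim B \le n-1$, to which the analysis of $Z$ applies. Refining $C$ by the projections of the resulting graph-multi-cells of $B$, and applying Proposition \ref{prop:dim}(1) to the coordinate functions of those graphs to make them continuous and locally monotone, I would arrange that over each refined base piece, and over each of its semi-definably connected components $Y$, the part of $W$ lying above $Y$ is cut out by two semi-definable continuous functions with values in $M \cup \{\pm\infty\}$, hence has one of the five admissible forms. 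The hard part is precisely this uniformity: a single multi-cell in the base may have infinitely many semi-definably connected components, and one must produce a single finite partition that works over all of them at once. This is where semi-definability in $\mathcal R$ is used in an essential way — ordinary o-minimal cell decomposition is available on every bounded box — and one has to show that the combinatorial complexity of the resulting decomposition is governed by the definable lower-dimensional set $B$ rather than by the individual components, so that finitely many cuts suffice. Combining the decompositions of $Z$ and $W$ then yields the desired finite partition of $X$ into multi-cells.
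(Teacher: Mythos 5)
Your proposal does not match the paper's treatment: the paper does not prove Theorem \ref{thm:multi-cell} at all, it simply cites \cite[Theorem 4.22]{Fuji}, where the multi-cell decomposition is one of the main results and is obtained after substantial preparation (essentially the machinery behind the uniform local definable cell decomposition, Theorem \ref{thm:uldcd} here). Measured as a self-contained argument, your sketch has a genuine gap exactly at the point you yourself flag as ``the hard part'': the uniformity over the (possibly infinitely many) semi-definably connected components of the base. The inductive hypothesis applied to the fibrewise-boundary set $B$ only yields multi-cells whose defining functions are \emph{semi-definable and defined component-by-component}; they are not finitely many globally definable functions, so ``refining $C$ by the projections of the graph-multi-cells of $B$'' does not produce a finite \emph{definable} partition of $C$ over which the branches of $B$ can be separated and counted. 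Since the number of interval components of a fibre $W_y$ is in no way uniformly bounded, there is no obvious finite family of definable cuts to compare against, and nothing in Proposition \ref{prop:dim} or Lemma \ref{lem:included} supplies one; this is precisely what the uniform (in parameters) cell decomposition of \cite{Fuji} is needed for, and asserting that ``the combinatorial complexity is governed by $B$'' is the statement to be proved, not an available tool.

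The same issue recurs in your treatment of $Z$: the ``finitely-many-per-bounded-box branches'' are not globally finite in number, so discarding collision/discontinuity loci and declaring each surviving branch a graph-type multi-cell presupposes a finite definable grouping of those branches that you have not constructed (and one must also check that each component of such a piece projects \emph{onto} a full semi-definably connected component of the base, as the definition of multi-cell requires). A smaller point: Proposition \ref{prop:dim}(1) is a statement about functions of one variable on an interval, so it cannot be applied to the coordinate functions of graphs over a subset of $M^{n-1}$; there you would need (6) and (7) instead. In short, the $n=1$ case and the overall induction scaffold are reasonable, but the core of the theorem --- producing a single finite definable partition that is simultaneously correct over infinitely many components --- is left unproven, and it is not recoverable from the results quoted in this paper alone; the paper sidesteps it by invoking \cite[Theorem 4.22]{Fuji}.
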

\begin{proof}
\cite[Theorem 4.22]{Fuji}
\end{proof}

Let us review the definition of cells.
\begin{definition}[Definable cell decomposition]
	Consider an expansion of dense linear order without endpoints $\mathcal M=(M,<,\ldots)$.
	Let $(i_1, \ldots, i_n)$ be a sequence of zeros and ones of length $n$.
	\textit{$(i_1, \ldots, i_n)$-cells} are definable subsets of $M^n$ defined inductively as follows:
	\begin{itemize}
		\item A $(0)$-cell is a point in $M$ and a $(1)$-cell is an open interval in $M$.
		\item An $(i_1,\ldots,i_n,0)$-cell is the graph of a definable continuous function defined on an $(i_1,\ldots,i_n)$-cell.
		An $(i_1,\ldots,i_n,1)$-cell is a definable set of the form $\{(x,y) \in C \times M\;|\; f(x)<y<g(x)\}$, where $C$ is an $(i_1,\ldots,i_n)$-cell and $f$ and $g$ are definable continuous functions defined on $C$ with $f<g$.
	\end{itemize}
	A \textit{cell} is an $(i_1, \ldots, i_n)$-cell for some sequence $(i_1, \ldots, i_n)$ of zeros and ones.
	The sequence $(i_1, \ldots, i_n)$ is called the \textit{type} of an $(i_1, \ldots, i_n)$-cell.
	An \textit{open cell} is a $(1,1, \ldots, 1)$-cell.
	The dimension of an $(i_1, \ldots, i_n)$-cell is defined by $\sum_{j=1}^n i_j$.
	
	We inductively define a \textit{definable cell decomposition} of an open box $B \subseteq M^n$.
	For $n=1$, a definable cell decomposition of $B$ is a partition $B=\bigcup_{i=1}^m C_i$ into finitely many cells.
	For $n>1$, a definable cell decomposition of $B$ is a partition $B=\bigcup_{i=1}^m C_i$ into finitely many cells such that $\pi(B)=\bigcup_{i=1}^m \pi(C_i)$ is a definable cell decomposition of $\pi(B)$, where $\pi:M^n \rightarrow M^{n-1}$ is the projection forgetting the last coordinate.
	Consider a finite family $\{A_\lambda\}_{\lambda \in \Lambda}$ of definable subsets of $B$.
	A \textit{definable cell decomposition of $B$ partitioning $\{A_\lambda\}_{\lambda \in \Lambda}$} is a definable cell decomposition of $B$ such that the definable sets $A_{\lambda}$ are unions of cells for all $\lambda \in \Lambda$. 
\end{definition}

The following theorem is the uniform local decomposition theorem for almost o-minimal structures.
\begin{theorem}[Uniform local definable cell decomposition]\label{thm:uldcd}
Consider an almost o-minimal expansion of an ordered group $\mathcal M=(M,<,0,+,\ldots)$.
Let $\{A_\lambda\}_{\lambda\in\Lambda}$ be a finite family of definable subsets of $M^{m+n}$.
Take an arbitrary positive element $R \in M$ and set $B=(-R,R)^n$.
Then, there exists a partition into finitely many definable sets 
\begin{equation*}
M^m \times B = X_1 \cup \ldots \cup X_k
\end{equation*}
such that $B=(X_1)_b \cup \ldots \cup (X_k)_b$ is a definable cell decomposition of $B$ for any $b \in M^m$ and either $X_i \cap A_\lambda = \emptyset$ or $X_i \subseteq A_\lambda$ for any $1 \leq i \leq k$ and $\lambda \in \Lambda$.
Furthermore, the type of the cell $(X_i)_b$ is independent of the choice of $b$ with $(X_i)_b \not= \emptyset$.
Here, the notation $S_b$ denotes the fiber of a definable subset $S$ of $M^{m+n}$ at $b \in M^m$.
\end{theorem}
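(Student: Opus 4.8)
The plan is to argue by induction on the fibre dimension $n$, peeling off the last coordinate at each step exactly as in the classical o-minimal cell decomposition, while exploiting two features special to the present setting: the fibre box $B=(-R,R)^n$ is bounded, so the only finiteness that must be controlled lives in a bounded region, and the global structure theorem (Theorem \ref{thm:multi-cell}) is available to supply that finiteness uniformly over the unbounded base $M^m$. First I would reduce to a single compatible partition: replacing $\{A_\lambda\}$ by the finite Boolean algebra it generates inside $M^m\times B$, it suffices to produce a partition $M^m\times B=X_1\cup\dots\cup X_k$ whose fibres are cell decompositions refining the Boolean atoms, since the conditions ``$X_i\cap A_\lambda=\emptyset$ or $X_i\subseteq A_\lambda$'' then hold automatically.

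For the base case $n=1$ I would analyse the fibres in the single bounded coordinate. For each $b\in M^m$ the fibre $(A_\lambda)_b\cap(-R,R)$ is, by almost o-minimality, a finite union of points and open intervals, so its boundary points are the data that must be tracked. The essential point is to make this uniform: I claim there is a finite partition of $M^m$ into definable pieces $P$ and, on each $P$, finitely many definable functions $\xi_1<\dots<\xi_l\colon P\to(-R,R)$ giving exactly the boundary points of the fibres, with the incidence pattern relative to the atoms constant on $P$. I would obtain this from Theorem \ref{thm:multi-cell}: decomposing the atoms into finitely many multi-cells and intersecting with the bounded box turns each multi-cell fibre into a finite, uniformly structured configuration, whence a uniform bound on $l$ and the definable boundary functions after further partitioning the base; Proposition \ref{prop:dim}(6) then lets me shrink the pieces $P$ so that each $\xi_i$ is continuous. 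Over each $P$ the graphs $\{t=\xi_i(b)\}$ and the bands between consecutive $\xi_i$ (together with the two end bands) are the desired cells; extending each by empty fibres outside $P$ produces the $X_i$, each of whose nonempty fibres is a cell of the single type $(0)$ or $(1)$.

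For the inductive step I would write $B=B'\times(-R,R)$ with $B'=(-R,R)^{n-1}$ and project off the last coordinate. Running the one-variable analysis above in that last coordinate over the base $M^m\times B'$ yields definable boundary functions $\xi_i(b,z')$ defined on definable subsets of $M^m\times B'$. I would then assemble the domains of these functions, their graphs, their discontinuity loci (controlled by Proposition \ref{prop:dim}(6)), and the atoms, into a finite family of definable subsets of $M^{m+(n-1)}$ and apply the inductive hypothesis to it, with base $M^m$ and fibre $B'$. This returns a uniform cell decomposition $\{X'_s\}$ of $M^m\times B'$ over each of whose fibre cells the functions $\xi_i$ are continuous, of constant order, and of constant incidence pattern. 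Building the graphs and bands of the $\xi_i$ over each $X'_s$, exactly as in the o-minimal induction, and again extending by empty fibres, gives a partition whose fibres are cell decompositions of $B$ refining the atoms, with the cell type of each $(X_i)_b$ constant on the locus where it is nonempty.

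The main obstacle is the uniform finiteness invoked in the inductive step: almost o-minimality only bounds each fibre individually, and nothing in a single fibre forces a uniform bound on the number of boundary points as $b$ ranges over the \emph{unbounded} base $M^m$. This is precisely the gap that Theorem \ref{thm:multi-cell} is meant to close, and the crux of the argument is the verification that, after intersecting a multi-cell with the bounded box $B$, its fibres really do decompose into a uniformly bounded number of cells carried by finitely many definable boundary functions. A secondary, softer obstacle is that the construction naturally partitions the base into pieces over which the combinatorial type is constant, whereas the theorem asks for a type independent of $b$ outright; this is resolved by the device, used throughout above, of extending each cell by empty fibres, so that every $X_i$ carries exactly one cell type and the partition of the base is recorded merely by which fibres are nonempty.
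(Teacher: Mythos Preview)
The paper does not actually prove this statement: its entire proof is the single citation \cite[Theorem~1.7]{Fuji}. So there is no argument in the paper to compare against; you are attempting to reconstruct a proof from the other preliminaries quoted in Section~\ref{sec:preliminary}.

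Your outline is the natural one, and you correctly isolate the decisive issue: passing from the pointwise fact ``for each $b\in M^m$ the fibre $(A_\lambda)_b\cap B$ is a finite union of points and open intervals'' (which almost o-minimality gives for free) to a \emph{uniform} bound on the number of pieces as $b$ ranges over the unbounded base $M^m$. You propose that the multi-cell decomposition (Theorem~\ref{thm:multi-cell}) supplies this uniformity, but you do not verify it, and the definition of a multi-cell does not make it evident. A multi-cell $X\subseteq M^m\times(-R,R)$ may well have infinitely many semi-definably connected components; over a fixed component $Z$ of $\pi(X)$ the components $Y$ with $\pi(Y)=Z$ do cut every fibre $X_b$, $b\in Z$, into the same finite pattern, but nothing in the definition bounds that number as $Z$ ranges over the possibly infinitely many components of $\pi(X)$, nor does it produce the finitely many \emph{globally $\mathcal M$-definable} boundary functions your inductive step requires. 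You yourself flag this step as ``the crux of the argument'', and it is: what you have written there is an assertion of precisely the uniform finiteness the theorem is meant to establish, not a derivation of it from Theorem~\ref{thm:multi-cell}. A secondary caution is possible circularity: in \cite{Fuji} the present statement is Theorem~1.7 while multi-cell decomposition is Theorem~4.22, and before invoking the latter you would need to check that its proof in \cite{Fuji} does not already rest on the former.
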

\begin{proof}
\cite[Theorem 1.7]{Fuji}
\end{proof}

\section{Proof of the main theorem}\label{sec:proof}
This section is devoted to the proof of Theorem \ref{thm:main1}.
We prove it and the following lemma simultaneously.
Our proof is partially inspired by the geometric proof of a similar assertion for o-minimal structures in \cite{Pillay}.

\begin{lemma}\label{lem:main1}
Let $\mathcal L$, $\mathcal M$, $\mathcal N$ and $a=(a_1,\ldots, a_n) \in N^n$ be as in Theorem \ref{thm:main1}.
Assume that $n>0$ and there are no $\mathcal L(\mathcal M)$-formula $\phi(y)$ such that $\mathcal N \models \phi(a)$ and $\dim(\{y \in M^n\;|\; \mathcal M \models \phi(y)\})<n$.
Set $\alpha =(a_1, \ldots, a_{n-1}) \in N^{n-1}$ and $\beta = a_n \in N$.

Let $\pi:M^{m+n-1} \rightarrow M^m$ be the coordinate projection onto the first $m$-coordinates.
We consider $\mathcal M$-definable subsets $T$ and $C$ of $M^m$ and $M^{m+n-1}$, respectively, with $T=\pi(C)$.
Let $f:C \rightarrow M$ be a bounded $\mathcal M$-definable function.
Then, the set $$\mathfrak T(T,C,f)=\{t \in T\;|\; \alpha \in (C^{\mathcal N})_t,\ f(t,\alpha)<\beta\}$$ is $\mathcal M$-definable, where $(C^{\mathcal N})_t$ denotes the fiber of the $\mathcal N$-definable set $C^{\mathcal N}$ at $t$.
\end{lemma}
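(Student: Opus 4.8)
\emph{Preliminary observations.} First I would note that the genericity hypothesis on $a$ passes to $\alpha$: if some $\mathcal L(M)$-formula $\psi(y_1,\dots,y_{n-1})$ with $\mathcal N\models\psi(\alpha)$ defined a subset of $M^{n-1}$ of dimension $<n-1$, then the same $\psi$, read as a formula in $n$ variables, would define a subset of $M^n$ of dimension $<n$ containing $a$ (the dimension grows by at most $1$ after the product with $M$ in the last coordinate, by Proposition \ref{prop:dim}(8)), contradicting the hypothesis; in particular no coordinate $a_i$ lies in $M$. Since $\beta$ is $\mathcal M$-bounded and $\mathcal M\subseteq\mathcal N$ is tame, the standard part $b:=\st(\beta)\in M$ exists, and since $\beta\notin M$ the element $\beta$ is either infinitesimally above $b$ or infinitesimally below $b$ (say $u\in N$ is \emph{infinitesimally above} $b$ if $b<u$ and $u<s$ for every $s\in M$ with $s>b$).

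\emph{Reduction via the inductive hypothesis.} Theorem \ref{thm:main1} and this lemma are proved by one induction on $n$, so I may use Theorem \ref{thm:main1} for tuples of fewer than $n$ $\mathcal M$-bounded parameters. Applying it to the tuple $\alpha$ and to suitable $\mathcal L(M)$-formulas shows that the following sets and functions are $\mathcal M$-definable: $T':=\{t\in T\mid\alpha\in(C^{\mathcal N})_t\}$; for each $s\in M$, uniformly in $s$, the sets $\{t\in T'\mid f^{\mathcal N}(t,\alpha)<s\}$, $\{t\in T'\mid f^{\mathcal N}(t,\alpha)\le s\}$ and $\{t\in T'\mid f^{\mathcal N}(t,\alpha)=s\}$; the function $c\colon T'\to M$ with $c(t)=\st(f^{\mathcal N}(t,\alpha))=\inf\{s\in M\mid f^{\mathcal N}(t,\alpha)<s\}$ (here the definable completeness of $\mathcal M$, recalled in Section \ref{sec:preliminary}, is used); and the analogous data for $g^{\mathcal N}(t,\alpha)$ whenever $g$ is a bounded $\mathcal M$-definable function on an $\mathcal M$-definable subset of $M^{m+n-1}$. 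Write $v_t:=f^{\mathcal N}(t,\alpha)$ and use that $\st$ is additive on $\mathcal M$-bounded elements. I would then partition $T'$ into $\{t\mid c(t)<b\}$, $\{t\mid c(t)>b\}$ and $A:=\{t\mid c(t)=b\}$.

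\emph{Disposing of all but the hard case.} On $\{t\mid c(t)<b\}$ one has $v_t<\beta$ and on $\{t\mid c(t)>b\}$ one has $v_t>\beta$, because when the standard parts of $v_t$ and $\beta$ differ the tameness description of the two elements forces the comparison. On $A$, split further by whether $v_t=b$, $v_t<b$ or $v_t>b$ (all three $\mathcal M$-definable by the previous paragraph) and record on which side of $b$ the fixed element $\beta$ lies; whenever $v_t$ and $\beta$ are not both strictly above $b$ or both strictly below $b$, the truth of $v_t<\beta$ is again forced by the standard parts together with tameness, so the corresponding contributions to $\mathfrak T(T,C,f)$ are $\mathcal M$-definable. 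Translating by $b$ — which preserves $\mathcal M$-definability of $C,f,T$ and the genericity of $(\alpha,\beta)$ — reduces everything to the remaining configuration: an $\mathcal M$-definable $A\subseteq M^m$ on which $v_t=f^{\mathcal N}(t,\alpha)$ is positive with $\st(v_t)=0$, together with a positive $\beta$ with $\st(\beta)=0$, and the task of proving that $\{t\in A\mid v_t<\beta\}$ is $\mathcal M$-definable.

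\emph{The hard case — the main obstacle.} Here both $v_t$ and $\beta$ are non-standard and infinitesimally close to $0$, so the comparison is invisible to the $\mathcal M$-structure through standard parts alone; this is the heart of the lemma. Note first that $v_t\ne\beta$ for all $t$ (otherwise $a$ lies on the graph of $f(t,\cdot)$, a set of dimension $<n$, contradicting genericity), so $\{t\in A\mid v_t<\beta\}$ and $\{t\in A\mid v_t>\beta\}$ partition $A$. Using Theorems \ref{thm:multi-cell} and \ref{thm:uldcd} I would refine the situation so that over each piece of a finite $\mathcal M$-definable partition of $A$ the fibre $C_t$ is an open cell (genericity of $\alpha$ forces it to be full-dimensional) and $f(t,\cdot)$ is continuous and, by Proposition \ref{prop:dim}(1) applied in the variable $x_{n-1}$ with the remaining $x$-variables frozen, is either locally constant or locally strictly monotone in $x_{n-1}$. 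On the locally constant pieces $v_t$ does not involve the coordinate $a_{n-1}$, so $\{t\mid v_t<\beta\}$ is again of the form $\mathfrak T(\cdot)$, this time for the generic $\mathcal M$-bounded $(n-1)$-tuple $(a_1,\dots,a_{n-2},a_n)$, and it is $\mathcal M$-definable by the inductive hypothesis. On the locally strictly monotone pieces, inverting $f(t,\cdot)$ in its last argument turns ``$f^{\mathcal N}(t,\alpha)<\beta$'' into a comparison of $a_{n-1}$ with $h^{\mathcal N}(t,a_1,\dots,a_{n-2},\beta)$ for a bounded $\mathcal M$-definable $h$, valid on the $\mathcal M$-definable locus where $\beta$ lies in the relevant range of $f^{\mathcal N}(t,a_1,\dots,a_{n-2},\cdot)$ (that locus and its complement being $\mathfrak T$-type sets for $(a_1,\dots,a_{n-2},a_n)$, hence $\mathcal M$-definable); after reordering the parameters this is once more a $\mathfrak T$-set, now attached to the generic $\mathcal M$-bounded $n$-tuple $(a_1,\dots,a_{n-2},a_n,a_{n-1})$. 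The real difficulty is to organize this last reduction so that it terminates: one must isolate the correct well-founded complexity (plausibly the dimension of $C$ or of a cell base, decreasing each time an $x$-coordinate is genuinely consumed by an inversion) and verify that the base case — a defining function depending on no $x$-coordinate, where $v_t\in M$ and ``$v_t<\beta$'' is $\mathcal M$-definable directly from tameness — is actually reached. Reassembling the finitely many $\mathcal M$-definable pieces produced along the way then yields that $\mathfrak T(T,C,f)$ is $\mathcal M$-definable.
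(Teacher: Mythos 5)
Your preliminary reductions are sound (genericity passes to subtuples, the comparisons $f(t,\alpha)<s$ uniformly in $s\in M$ and the map $t\mapsto\st(f(t,\alpha))$ are $\mathcal M$-definable by the induction hypothesis for the shorter tuple $\alpha$, and the cases where $\st(f(t,\alpha))\neq\st(\beta)$ or where the two elements sit on opposite sides of the common standard part are immediate). But these steps dispose only of the trivial part of the problem, and the proof collapses exactly where you say it does: in the ``hard case'' your recursion has no decreasing measure. Decomposing $f$ by monotonicity in $x_{n-1}$ and then inverting produces a comparison of $a_{n-1}$ with $h^{\mathcal N}(t,a_1,\dots,a_{n-2},\beta)$, i.e.\ a $\mathfrak T$-type set attached to the permuted $n$-tuple $(a_1,\dots,a_{n-2},a_n,a_{n-1})$ over the \emph{same} base $T$; neither the length of the parameter tuple nor $\dim T$ has dropped, so the induction hypothesis does not apply. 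Worse, if you treat the new problem by the same recipe, the last substituted variable is now the slot receiving $\beta$, in which $h$ is automatically strictly monotone, so the next inversion simply returns you to the original problem: the procedure cycles rather than terminates. The suggestion that ``the dimension of $C$ or of a cell base'' decreases is not substantiated — the inversion consumes no $x$-coordinate, it only swaps one with the comparison slot. Since this is precisely the content of the lemma (everything before it is standard-part bookkeeping), the proposal has a genuine gap.

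The paper avoids this trap by performing the monotonicity decomposition in a different variable: the \emph{last coordinate $v$ of the parameter $t\in M^m$}, which is a genuine $M$-variable, not one of the slots receiving the nonstandard tuple. After reducing (Claim 4) to the case where $f(u,\cdot,\alpha)$ is strictly increasing and continuous on each fiber interval of $T$ (the locally constant regime is eliminated by the retraction $\rho$ onto a set of dimension $<d=\dim T$, invoking the induction on $\dim T$ — this is the second component of the lexicographic induction, which your argument never brings into play), one gets for each $(u,v)\in T'$ a unique crossing point $\delta(u,v)\in N$ with $f(u,\delta(u,v),\alpha)=\beta$, and the whole lemma reduces to showing that $\st\circ\delta$ is $\mathcal M$-definable. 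That is achieved by the dimension dichotomy on the sets $L(\widetilde u,\widetilde v)$: in Case A one of them has dimension $<d$ and the induction hypothesis applies directly, while in Case B the fibers $\Lambda_{(u,v)}$ are zero-dimensional, discrete and closed, so $\st(\delta(u,v))$ is the nearest point of $\Lambda_{(u,v)}$ to $\delta(u,v)$ and its graph is $\mathcal M$-definable because $\dim\bigl((T'\times M)\cap\Lambda\bigr)<d$. Finally $\mathfrak T(T,C,f)$ is read off by comparing $v$ with $\st(\delta(\rho(u,v)))$. Some such mechanism for extracting an $\mathcal M$-definable ``threshold'' function — together with a measure like $\dim T$ that actually decreases — is what your outline is missing; a separate base-case argument (the paper's $d=0$ case via the map $\lambda$ and a definable choice $\tau$ constant on a cell containing $a$) is also needed and is not supplied by your sketch.
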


\begin{proof}[Proof of Theorem \ref{thm:main1} and Lemma \ref{lem:main1}]
By Lemma \ref{lem:included}, there exists an o-minimal structure $\mathcal R$ having the same universe $M$ as $\mathcal M$ such that (i) any $\mathcal R$-definable set is definable in $\mathcal M$ and (ii) any bounded $\mathcal M$-definable set is definable in $\mathcal R$.
We fix such an o-minimal structure $\mathcal R$ in the proof.

\medskip 
In the proof of Theorem \ref{thm:main1}, we set 
\begin{align*}
&X=\{(x,y) \in M^m \times M^n\;|\; \mathcal M \models \Phi(x,y)\} \text{ and }\\
&T=\{x \in M^m\;|\; \mathcal M \models \exists y\  \Phi(x,y)\}.
\end{align*}
Let $C$ be the image of $X$ under the coordinate projection of $M^{m+n}$ forgetting the last coordinate in the proof of Theorem \ref{thm:main1}.
The same symbols $C$ and $T$ are used in the statement of the lemma, but this abuse of symbols will not confuse the readers.

Let $\len(a)$ denote the length of the tuple $a$.
We prove Theorem \ref{thm:main1} and Lemma \ref{lem:main1} by induction on $(\len(a),\dim T)$ in the lexicographic order simultaneously.
Theorem \ref{thm:main1} obviously holds true when $n=0$.
In the rest of the proof, we assume that $n>0$.
We set $\alpha =(a_1, \ldots, a_{n-1}) \in N^{n-1}$ and $\beta = a_n \in N$.
We first show that Lemma \ref{lem:main1} implies Theorem \ref{thm:main1}.
\medskip

\textbf{Claim 1.} We may assume that $X_t$ is a bounded cell for any $t \in T$.
\medskip

Set $b=\st(a)$ and fix a bounded open box $B$ in $M^n$ containing the point $b$. 
There is a partition
$$X \cap (M^m \times B)=X_1 \cup \ldots \cup X_k$$
such that either the fiber $(X_i)_t$ of $X_i$ at $t$ is empty or a cell for any $1 \leq i \leq k$ and $t \in T$ by Theorem \ref{thm:uldcd}.
Set $S_i=\{ x \in M^m\;|\; (x,a) \in X_i^\mathcal N\}$.
Since $S=\bigcup_{i=1}^kS_i$, the set $S$ is $\mathcal M$-definable if $S_i$ are $\mathcal M$-definable for all $1 \leq i \leq k$.
Considering $S_i$ instead of $S$, we may assume that $X_t$ is a bounded cell for any $t \in T$.

\medskip

\textbf{Claim 2.} We may assume that there are no $\mathcal L(M)$-formula $\phi(y)$ such that $\mathcal N \models \phi(a)$ and $\dim(\{y \in M^n\;|\; \mathcal M \models \phi(y)\})<n$.
\medskip

Assume that such a formula $\phi(y)$ exists.
Set $V=\{y \in M^n\;|\; \mathcal M \models \phi(y)\}$.
We may assume that $V$ is bounded considering $V \cap B$ instead of $V$ if necessary.
The $\mathcal M$-definable set $V$ is definable in $\mathcal R$.
Apply the definable cell decomposition theorem for o-minimal structures \cite{vdD}.
We can get a cell decomposition $\{C_i\}_{i=1}^l$ partitioning $V$. 
Note that the $\mathcal R$-definable sets $C_i$ are also definable in $\mathcal M$.
We have $a \in C_j^{\mathcal N}$ for some $1 \leq j \leq k$.
We may assume that $V$ is a bounded cell definable in $\mathcal R$ considering the cell $C_j$ instead of $V$.
Set $\ell=\dim(V)$.
Let $p:M^n \rightarrow M^\ell$ be the projection onto the first $\ell$ coordinates.
Since $V$ is a cell, we may assume that $p(V)$ is open and $V$ is the graph of an $\mathcal R$-definable continuous map $\varphi:p(V) \rightarrow M^{n-\ell}$ by permuting the coordinates if necessary.
Consider the $\mathcal L(M)$-formula $\psi(x,z)=(z \in p(V)) \wedge \Phi(x,(z,\varphi(z)))$.
We obviously have $S=\{x \in M^m\;|\; \mathcal N \models \psi(x,(a_1,\ldots, a_\ell))\}$.
When $\ell<n$, the set $S$ is $\mathcal M$-definable by the induction hypothesis.
We have succeeded in the reduction. 
\medskip

We are now ready to demonstrate that Lemma \ref{lem:main1} implies Theorem \ref{thm:main1}.
By Claim 1 and the definition of cells, there exist bounded $\mathcal M$-definable functions $f,g:C \rightarrow M$ such that, $f<g$ on their domain, the functions $f(t,\cdot)$ and $g(t,\cdot)$ defined on $C_t$ are continuous for any $t \in T$, and we have either 
\begin{align*}
&X=\{(t,x,y) \in T \times M^{n-1} \times M\;|\; x \in C_t, \ f(t,x)<y<g(t,x)\} \text{ or }\\
&X=\{(t,x,y) \in T \times M^{n-1} \times M\;|\; x \in C_t, \ f(t,x)=y\},
\end{align*}
where $C_t$ denotes the fiber of $C$ at $t$.
In the second case, the set $S$ is an empty set.
In fact, if $S$ is not empty, there exists $t \in M^n$ with $\beta=f(t,\alpha)$, which contradicts Claim 2.
By the definition of $S$ and $X$, we have
\begin{align*}
S &=\{t \in M^m\;|\; t \in T^{\mathcal N},\ \alpha \in (C^\mathcal N)_t,\ f(t,\alpha)<\beta<g(t,\alpha)\}\\
&=\{t \in T\;|\; \alpha \in (C^\mathcal N)_t,\ f(t,\alpha)<\beta<g(t,\alpha)\}
\end{align*}
because $\mathcal N$ is an elementary extension of $\mathcal M$.
We therefore have
\begin{align*}
S&=\{t \in T\;|\; \alpha \in (C^\mathcal N)_t,\ f(t,\alpha)<\beta\} \\
&\qquad \setminus (\{t \in T\;|\; \alpha \in (C^\mathcal N)_t,\ g(t,\alpha)<\beta\} \cup \{t \in T\;|\; \alpha \in (C^\mathcal N)_t,\ g(t,\alpha)=\beta\}).
\end{align*}
Note that Claim 2 implies that the tuple $a$ satisfies the assumption in Lemma \ref{lem:main1}.
The first and second sets in the right hand of the equality are $\mathcal M$-definable by Lemma \ref{lem:main1}.
The third set is an empty set; otherwise, it contradicts Claim 2.
We have demonstrated that Lemma \ref{lem:main1} implies Theorem \ref{thm:main1}.
\medskip

The remaining task is to demonstrate Lemma \ref{lem:main1}.
The induction hypothesis implies that we may assume that $\mathfrak T(T,C,f)$ is of a simpler form.
\medskip

\textbf{Claim 3.} We may assume that $\alpha \in (C_{\mathcal N})_t$ for any $t \in T$ and 
\begin{equation}
\mathfrak T(T,C,f)=\{t \in T\;|\;f(t,\alpha)<\beta\}. \label{eq:aaa1}
\end{equation}
\medskip

In fact, by the induction hypothesis, Theorem \ref{thm:main1} holds for $\alpha$. 
Therefore, the set 
\begin{align*}
D&=\{t \in T\;|\; \alpha \in (C^{\mathcal N})_t\}=\{t \in M^m\;|\; t \in T^{\mathcal N},\ \alpha \in (C^{\mathcal N})_t\}
\end{align*}
is an $\mathcal M$-definable set.
We obviously have $\mathfrak T(T,C,f)=\{t \in D\;|\; f(t,\alpha)<\beta\}=\mathfrak T(D, C \cap (D \times M^{n-1}), f|_{C \cap (D \times M^{n-1})})$, where the notation $f|_{C \cap (D \times M^{n-1})}$ denotes the restriction of $f$ to $C \cap (D \times M^{n-1})$.
The tuple $(D, C \cap (D \times M^{n-1}), f|_{C \cap (D \times M^{n-1})})$ satisfies the equality (\ref{eq:aaa1}).
Therefore, we may assume that the conditions in Claim 3 hold true, considering the tuple $(D, C \cap (D \times M^{n-1}), f|_{C \cap (D \times M^{n-1})})$ instead of the tuple $(T,C,f)$.
\medskip

Set $d=\dim T$.
We are now ready to show Lemma \ref{lem:main1} when $d=0$.
We reduce to the case in which $f$ is defined on $T \times M^{m-1}$.
In fact, take an element $c \in M$ larger than $\beta$.
It is possible because $\beta$ is $\mathcal M$-bounded.
Consider the $\mathcal M$-definable function $f': T \times M^{n-1} \rightarrow M$ which coincides with $f$ on $C$ and is constantly $c$ elsewhere.
We have 
\begin{align*}
\mathfrak T(T,C,f)&=\{t \in T\;|\; f(t,\alpha)<\beta\} =\{t \in T\;|\; f'(t,\alpha)<\beta\} \\
&= \mathfrak T(T,T \times M^{n-1},f').
\end{align*}
Replacing $f$ with $f'$, we may assume that $f$ is defined on $T \times M^{n-1}$. 

Consider the $\mathcal M$-definable set $$E=\{(x,y) \in M^{n-1} \times M\;|\; \mathcal M \models \exists t \in T\ f(t,x)<y\}.$$
When $a=(\alpha,\beta) \not\in E^{\mathcal N}$, the set $\mathfrak T(T,C,f)$ is an empty set, and it is obviously $\mathcal M$-definable.
Hence, we may assume that $a=(\alpha,\beta) \in E^{\mathcal N}$.
By the assumption of the lemma, the definable set $E$ has a nonempty interior.
Consider the map $\lambda: E \rightarrow M$ defined by $$\lambda(x,y)=\sup\{f(t,x)\;|\;t \in T,\ f(t,x)<y\}.$$
We also consider the $\mathcal M$-definable set given by $$Z=\{(t,x,y) \in T \times M^{n-1} \times M\;|\; f(t,x)<y,\ f(t,x)=\lambda(x,y)\}.$$
The set $\{f(t,x)\;|\;t \in T,\ f(t,x)<y\}$ is of dimension zero for any fixed $(x,y) \in E$ by Proposition \ref{prop:dim}(5) because $T$ is of dimension zero.
It is closed and discrete by Proposition \ref{prop:dim}(3).
It means that, for any $(x,y) \in E$, the fiber of the set $Z$ at $(x,y)$ given by
$$Z_{(x,y)}=\{t \in T\;|\;\ f(t,x)<y,\ f(t,x)=\lambda(x,y)\}$$ 
is not an empty set.
It immediately implies the inequality 
\begin{equation}
\lambda(x,y)<y. \label{eqeq:a1}
\end{equation}

We can construct an $\mathcal M$-definable map $\tau:E \rightarrow T$ so that $(\tau(x,y),x,y) \in Z$ by Proposition \ref{prop:dim}(9).

The set of points at which $\tau$ is discontinuous is of dimension smaller than $\dim E=n$ by Proposition \ref{prop:dim}(6).
In particular, it has an empty interior.
Therefore, the interior $U$ of the set 
$$\{x \in E\;|\; \tau \text{ is continuous at } x\}$$
has a nonempty $\mathcal M$-definable subset of $M^n$ with $\dim (E \setminus U)<n$ by Proposition \ref{prop:dim}(2) and (7).
By the assumption on the tuple $a$ in the lemma, we get $a=(\alpha,\beta) \in U^{\mathcal N}$.

Take a bounded open box $B$ in $M^n$ containing the point $b=\st(a)$.
We immediately get $a \in B^{\mathcal N}$.
The intersection $U \cap B$ is definable in $\mathcal R$.
Apply the definable cell decomposition theorem for o-minimal structures \cite[Chapter 3, Theorem 2.11]{vdD}.
We can partition $U \cap B$ into $\mathcal R$-definable cells.
Let $U \cap B=U_1 \cup \ldots \cup U_l$ be such a partition.
Note that $U_i$ are all $\mathcal M$-definable for all $1 \leq i \leq l$.
Since $a \in (U \cap B)^{\mathcal N}$, we have $a \in (U_i)^{\mathcal N}$ for some $ 1 \leq i \leq l$.
We may assume that $i=1$ without loss of generality.
Recall that the map $\tau$ is continuous and $U_1$ is definably connected because $U_1$ is a cell.
Therefore, the image $\tau(U_1)$ is definably connected.
Since $\tau(U_1)$ is a subset of the discrete set $T$, the set $\tau(U_1)$ is a singleton.
Let $u \in M^m$ be the unique point contained in $\tau(U_1)$.
We have 
\begin{equation}
f(u,x)=\lambda(x,y) \label{eqeq:a2}
\end{equation}
for all $(x,y) \in U_1$ because $\tau(x,y) \in Z_{(x,y)}$.

We want to show that
\begin{equation}
\mathcal M \models \forall t \in T\ (f(t,x)<y\ \leftrightarrow \ f(t,x) \leq f(u,x)) \label{eqeq:a3}
\end{equation}
for all $(x,y) \in U_1$.
Fix arbitrary $(x,y) \in U_1$ and $t \in T$.
Assume first that $f(t,x)<y$.
We have $f(t,x) \leq \lambda(x,y)$ by the definition of the function $\lambda$. 
We then get $f(t,x) \leq f(u,x)$ by the equality (\ref{eqeq:a2}).
We next prove the opposite implication.
Assume that $f(t,x) \leq f(u,x)$.
The inequality (\ref{eqeq:a1}) and the equality (\ref{eqeq:a2}) immediately imply the inequality $f(t,x)<y$.
We have shown the equivalence (\ref{eqeq:a3}).
We now get 
$$\mathcal N \models \forall t \in T^{\mathcal N},\ (f(t,\alpha)<\beta\ \leftrightarrow \ f(t,\alpha) \leq f(u,\alpha))$$
because $a=(\alpha,\beta) \in (U_1)^{\mathcal N}$.
In particular, we obtain $$\mathfrak T(T,C,f)=\{t \in T\;|\; f(t,\alpha) \leq f(u,\alpha)\}.$$
The right hand of the equality is $\mathcal M$-definable by the induction hypothesis.
We have demonstrated Lemma \ref{lem:main1} when $d=0$.
\medskip

We finally prove Lemma \ref{lem:main1} for $d>0$.
Let $\mathfrak p:M^m \rightarrow M^{m-1}$ be the coordinate projection forgetting the last coordinate.
We reduce to a simpler case.
\medskip

\textbf{Claim 4.} We may assume the following:
\begin{itemize}
\item For any $u \in \mathfrak p(T)$ and $w \in M^{n-1}$, the set $I_{u,w}(C)=\{v \in M\;|\; (u,v,w) \in C\}$ is either an empty set or the union of open intervals, and the map $f_{u,w}: I_{u,w}(C) \rightarrow M$ given by $f_{u,w}(v)=f(u,v,w)$ is locally strictly increasing and continuous;
\item We have $\alpha \in (C^{\mathcal N})_t$ for any $t \in T$.
\end{itemize}
\medskip

We prove Claim 4.
We first reduce to the case in which $(*)$ the fiber $T_u=\{v \in M\;|\; (u,v) \in T\}$ is of dimension one for any $u \in \mathfrak p(T)$.
Let $\pi_i:M^m \rightarrow M$ be the coordinate projection onto the $i$-th coordinate.
Set $d'=\min\{ 1 \leq i \leq m\;|\; \exists u \ \dim \pi_i^{-1}(u) \cap T=1\}$.
We prove it by reverse induction on $d'$.
When $d'<m$, permute the $i$-th and $m$-th coordinate.
We can reduce to the case in which $d'=m$.
We set $P_i=\{u \in \mathfrak p(T)\;|\; \dim T_u=i\}$ for $i=0,1$.
Set $T_i=\mathfrak p^{-1}(P_i) \cap T$. 
The set $T_1$ satisfies the condition $(*)$.
When $\dim T_0<d$, the lemma is true for $T_0$ by the induction hypothesis on $d$.
We can reduce to the case in which the condition $(*)$ is satisfied when $\dim T_0=d$ by the induction hypothesis on $d'$.
We have $\dim \mathfrak p(T)=d-1$ by  Proposition \ref{prop:dim}(8) when the condition $(*)$ is satisfied.

We next reduce to the case in which $T$ is a multi-cell and $\dim \mathfrak p(T)=d-1$.
We may assume that the fiber $T_u=\{v \in M\;|\; (u,v) \in T\}$ is of dimension one for some $u \in \mathfrak p(T)$ by permuting the coordinates if necessary.
Apply Theorem \ref{thm:multi-cell}.
Let $T=\bigcup_{i=1}^l D_i$ be a partition into multi-cells.
Since we have $\mathfrak T(T,C,f)=\bigcup_{i=1}^l \mathfrak T(D_i, C \cap (D_i \times M^{n-1}), f|_{C \cap (D_i \times M^{n-1})})$, the set $\mathfrak T(T,C,f)$ is $\mathcal M$-definable if $\mathfrak T(D_i, C \cap (D_i \times M^{n-1}), f|_{C \cap (D_i \times M^{n-1})})$ are $\mathcal M$-definable for all $1 \leq i \leq l$.
Therefore, we may assume that $T$ is a multi-cell without loss of generality.

For simplicity, we assume that $C=T \times M^{n-1}$.
In fact, the same proof as the case in which $d=0$ justifies this assumption. 
We consider the sets
\begin{align*}
B_1&=\{(u,v,w) \in T \times M^{n-1}\;|\; \exists s_1, s_2,\ s_1<v<s_2,\ \{u\} \times (s_1,s_2) \times \{w\} \subseteq C,\\
&\ f_{u,w} \text{ is strictly increasing and continuous on }(s_1,s_2)\},\\
B_2&=\{(u,v,w) \in T \times M^{n-1}\;|\; \exists s_1, s_2,\ s_1<v<s_2,\ \{u\} \times (s_1,s_2) \times \{w\} \subseteq C,\\
&\ f_{u,w} \text{ is strictly decreasing and continuous on }(s_1,s_2)\}\text{ and }\\
B_3&=\{(u,v,w) \in T \times M^{n-1}\;|\; \exists s_1, s_2,\ s_1<v<s_2,\ \{u\} \times (s_1,s_2) \times \{w\} \subseteq C,\\
&\ f_{u,w} \text{ is constant on }(s_1,s_2)\},
\end{align*}
where $u$, $v$ and $w$ are elements in $M^{m-1}$, $M$ and $M^{n-1}$, respectively.
Set $B_4=(T \times M^{n-1}) \setminus (B_1 \cup B_2 \cup B_3)$.
By Proposition \ref{prop:dim}(1), the set $\{v \in M\;|\; (u,v,w) \in B_4\}$ is of dimension not greater than zero for any fixed $u \in \mathfrak p(T)$ and $w \in M^{n-1}$. 
We get $\dim B_4 < d+n-1$ by Proposition \ref{prop:dim}(8).
We also have $$\mathfrak T(T,C,f)=\{t \in T\;|\; f(t,\alpha)<\beta\}=\bigcup_{i=1}^4\{t \in T\;|\;\alpha \in (B_i^\mathcal N)_t,\  f(t,\alpha)<\beta\}.$$

Set 
$T_i=\{t \in T\;|\; \alpha \in (B_i^{\mathcal N})_t\}$
for $1 \leq i \leq 4$.
The sets $T_i$ are $\mathcal M$-definable by the induction hypothesis.
By the definition of $T_i$, the condition that $\alpha \in (B_i^{\mathcal N})_t$ is satisfied for any $t \in T_i$.
We also set $C_i=(T_i \times M^{n-1}) \cap B_i$ for all $1 \leq i \leq 4$.
For any $t \in T_i$, we have $(B_i)_t=(C_i)_t$ and we get $(B_i^{\mathcal N})_t=(C_i^{\mathcal N})_t$.
Therefore, we have $\alpha \in (C_i^{\mathcal N})_t$ for any $t \in T_i$.
We get $$\{t \in T\;|\;\alpha \in (B_i^\mathcal N)_t,\  f(t,\alpha)<\beta\}=\{t \in T_i\;|\;\   f(t,\alpha)<\beta\}=\mathfrak T(T_i, C_i, f|_{C_i}).$$
We have shown $\mathfrak T(T,C,f)=\bigcup_{i=1}^4 \mathfrak T(T_i, C_i, f|_{C_i})$.
Therefore, we have only to prove that $\mathfrak T(T_i, C_i, f|_{C_i})$ is $\mathcal M$-definable for each $1 \leq i \leq 4$.
As for the case in which $i=4$, we have $\dim (T_4)<d$ by Proposition \ref{prop:dim}(8) because the fiber $(B_4)_t$ is of dimension $n-1$ for each $t \in T_4$ because $(B_4^{\mathcal N})_t$ contains the point $\alpha$.
The set $\mathfrak T(T_4, C_4, f|_{C_4})$ is $\mathcal M$-definable by the induction hypothesis.
We consider the case in which $i=1$.
The tuple $(T_1,C_1 , f|_{C_1 })$ satisfies the conditions in the claim.
The set $\mathfrak T(T_1, C_1, f|_{C_1})$ is also $\mathcal M$-definable by the assumption of the claim.
The case in which $i=2$ is also easy.
Set 
\begin{align*}
&\widehat{T_2}=\{(u,v) \in M^{m-1} \times M\;|\; (u,-v) \in T_2\} \text{ and }\\
&\widehat{C_2}=\{(u,v,w) \in M^{m-1} \times M \times M^{n-1}\;|\; (u,-v,w) \in C_2\}
\end{align*}
The $\mathcal M$-definable function $\widehat{f}:\widehat{C_2} \rightarrow M$ is given by $\widehat{f}(u,v,w)=f(u,-v,w)$.
The tuple $(\widehat{T_2},\widehat{C_2},\widehat{f})$ satisfies the assumption of the claim.
The set $\mathfrak T(\widehat{T_2},\widehat{C_2},\widehat{f})$ is $\mathcal M$-definable.
The set $\mathfrak T(T_2,C_2,f|_{C_2})$ is also $\mathcal M$-definable because $\mathfrak T(T_2,C_2,f|_{C_2})=\{(u,v) \in M^{n-1} \times M\;|\;(u,-v) \in \mathfrak T(\widehat{T_2},\widehat{C_2},\widehat{f})\}$.

We finally consider the case in which $i=3$.
We set $T=T_3$, $C=C_3$ and $f=f|_{C_3}$ for the simplicity of notations.
The following conditions are satisfied:
\begin{itemize}
\item For any $u \in \mathfrak p(T)$ and $w \in M^{n-1}$, the set $I_{u,w}(C)$ is either an empty set or the union of open intervals, and the map $f_{u,w}: I_{u,w}(C) \rightarrow M$ is locally constant;
\item We have $\alpha \in (C^{\mathcal N})_t$ for any $t \in T$.
\end{itemize}
We have only to demonstrate that $\mathfrak T(T,C,f)$ is $\mathcal M$-definable in this case.
We construct an $\mathcal M$-definable map $\rho:T \rightarrow T$ as follows:
Fix an arbitrary element $u \in \mathfrak p(T)$ and $v \in M$ with $(u,v) \in T$.
We define $r_1(u,v)$ and $r_2(u,v)$ by
\begin{align*}
& r_1(u,v) = \inf \{v' \in M\;|\; \forall v'',\ (v'<v''<v) \rightarrow (u,v'') \in T\} \text{ and }\\
& r_2(u,v) = \sup \{v' \in M\;|\; \forall v'',\ (v<v''<v') \rightarrow (u,v'') \in T\}.
\end{align*}
Take a positive element $c \in M$.
We set $\rho(u,v)$ as follows:
\begin{align*}
&\rho(u,v)=\left\{\begin{array}{ll}
(u,0) & \text{if }r_1(u,v)=-\infty \text{ and } r_2(u,v)=+\infty,\\
(u,r_2(u,v)-c) & \text{if } r_1(u,v)=-\infty \text{ and }r_2(u,v)<\infty,\\
(u,r_1(u,v)+c) &  \text{if } r_1(u,v)>-\infty \text{ and }r_2(u,v)=\infty,\\
(u,(r_1(u,v)+r_2(u,v))/2) & \text{otherwise.}
\end{array}\right.
\end{align*}
It is easy to demonstrate that, for any $u \in \mathfrak p(T)$, the fiber $\rho(T)_u$ of $\rho(T)$ at $u$ does not contain an interval.
It means that $\dim \rho(T)_u = 0$.
We get $\rho(T)<d$ by Proposition \ref{prop:dim}(8) because $\dim \mathfrak p(T)=d-1$.
The restriction of $\rho$ to $\rho(T)$ is an identity map by the definition.
A locally constant function definable in a definably complete structure defined on an open interval is constant.
Therefore, the restriction of $f_{u,v}$ to an open interval is constant.
It implies the equality $$f(u,v,w)=f(\rho(u,v),w)$$ for any $(u,v,w) \in C$.
In particular, we obtain
$$f(u,v,\alpha)=f(\rho(u,v),\alpha)$$
for any $(u,v) \in T$.
Set $C'=(\rho(T) \times M^{n-1}) \cap C$.
The map $f'$ is defined as the restriction of $f$ to $C'$.
The set $\mathfrak T(\rho(T),C',f')$ is an $\mathcal M$-definable set by the induction hypothesis because $\dim \rho(T)<d$.
On the other hand, we get
\begin{align*}
\mathfrak T(T,C,F) &= \{(u,v) \in T\;|\; f(u,v,\alpha)<\beta\} = \{(u,v) \in T\;|\; f(\rho(u,v),\alpha)<\beta\}\\
&=\rho^{-1}(\{(u,v) \in \rho(T)\;|\; f(u,v,\alpha)<\beta\})=\rho^{-1}(\mathfrak T(\rho(T),C',f')).
\end{align*} 
It implies that the set $\mathfrak T(T,C,F)$ is an $\mathcal M$-definable set.
We have completed the proof for the case in which $i=3$, and we also have demonstrated Claim 4.
\medskip

The maps $\rho:T \rightarrow T$, $r_1:T \rightarrow M \cup \{-\infty\}$ and $r_2:T \rightarrow M \cup \{+\infty\}$ are the map defined in the proof of Claim 4.
Set $T'=\rho(T)$.
The maps $\kappa_1: T' \rightarrow M \cup \{-\infty\}$ and $\kappa_2: T' \rightarrow M \cup \{\infty\}$ are the restrictions of $r_1$ and $r_2$ to $T'$, respectively.
We use these notations in the rest of the proof.
The first condition of Claim 4 is expressed by a first-order formula.
Therefore, the univariate function $f(u, \cdot, \alpha)$ is also strictly increasing and continuous in the interval $(r_1(u,v),r_2(u,v))$ for all $(u,v) \in T$.
We use this fact without notice.
\medskip

\textbf{Claim 5.} We may further assume that, for any $u \in \mathfrak p(T)$ and any maximal interval $I$ contained in the fiber $T_u$ of $T$ at $u$, there exists a unique $d \in I^\mathcal N$ such that $$f(u,d,\alpha)=\beta.$$
\medskip

We demonstrate Claim 5.
Since $\mathcal N$ is an elementary extension of $\mathcal M$,  $f(u, \cdot,\alpha)$ is strictly increasing in $I^{\mathcal N}$ by Claim 4.
Therefore, there is at most one element $d \in I^{\mathcal N}$ satisfying the condition in Claim 5.
Set $$W_1=\{(u,v) \in T'\;|\; \forall v' \in N,\ (\kappa_1(u,v)<v'<\kappa_2(u,v)) \rightarrow (f(u,v',\alpha)<\beta)\}.$$
Theorem \ref{thm:main1} holds true for $T'$ by the induction hypothesis.
Therefore, the set $W_1$ is $\mathcal M$-definable.
Consider the set $$\widetilde{W_1}=\{(u,v) \in T\;|\; \forall v' \in N,\ (r_1(u,v)<v'<r_2(u,v)) \rightarrow (f(u,v',\alpha)<\beta)\}.$$
We have $\widetilde{W_1}=\rho^{-1}(W_1)$.
It implies that $\widetilde{W_1}$ is also $\mathcal M$-definable.
We put $$\widetilde{W_2}=\{(u,v) \in T\;|\; \forall v' \in N,\ (r_1(u,v)<v'<r_2(u,v)) \rightarrow (f(u,v',\alpha)>\beta)\}.$$
It is also $\mathcal M$-definable similarly.

Set $\widetilde{T}=T \setminus (\widetilde{W_1} \cup \widetilde{W_2})$, $\widetilde{C}=C \cap (\widetilde{T} \times M^{n-1})$ and $\widetilde{f}=f|_{C'}$.
It is obvious that $\mathfrak T(T,C,f)=\widetilde{W_1} \cup \mathfrak T(\widetilde{T},\widetilde{C},\widetilde{f})$.
We may assume that the condition in Claim 5 is satisfied considering the tuple $(\widetilde{T},\widetilde{C},\widetilde{f})$ instead of the tuple $(T,C,f)$.
We have proven Claim 5.
\medskip

Thanks to Claim 5, for any $(u,v) \in T'$, we can find the unique $d \in N$ satisfying $\kappa_1(u,v)<d<\kappa_2(u,v)$ and $f(u,d,\alpha)=\beta$.
We denote such $d$ by $\delta(u,v)$.
It induces a map $\delta:T' \rightarrow N$.  
Consider the $\mathcal L(M)$-formula:
\begin{align}
\Theta(u,v,v',w,z) &= ((u,v) \in T') \wedge ((u,v,w,z) \in C) \wedge (\kappa_1(u,v)<v'<\kappa_2(u,v))\nonumber\\
& \qquad \wedge (f(u,v',w)=z).\label{eqeq:theta}
\end{align}
Note that the graph of $\delta$ is given by $\{(u,v,v') \in T' \times N\;|\; \mathcal N \models \Theta(u,v,v',\alpha,\beta)\}$.

We consider the following two $\mathcal L(M \cup \{\alpha\})$-formulas:
\begin{align*}
\widetilde{\epsilon_1}(u_1,v_1,v',u_2,v_2) &= ((u_1,v_1) \in T) \wedge ((u_2,v_2) \in T) \wedge (r_1(u_1,v_1)<v' < r_2(u_1,v_1))\\
& \wedge (f(u_2,v_2,\alpha) \geq f(u_1,v',\alpha)),\\
\widetilde{\epsilon_2}(u_1,v_1,v',u_2,v_2) &= ((u_1,v_1) \in T) \wedge ((u_2,v_2) \in T) \wedge (r_1(u_1,v_1) < v' < r_2(u_1,v_1))\\
& \wedge (f(u_2,v_2,\alpha) \leq f(u_1,v',\alpha)).
\end{align*}
There is an $\mathcal L(M)$-formula $\epsilon'_i(u_1,v_1,v',u_2,v_2)$ such that
\begin{align*}
\mathcal N \models \widetilde{\epsilon_i}(u_1,v_1,v',u_2,v_2) \Leftrightarrow \mathcal M \models \epsilon'_i(u_1,v_1,v',u_2,v_2)
\end{align*}
for any $i=1,2$ and $(u_1,v_1,v',u_2,v_2) \in T \times M \times T$ because Theorem \ref{thm:main1} holds true for $\alpha$ by the induction hypothesis.
We set 
\begin{align*}
\epsilon_1(u_1,v_1,u_2,v_2) &= \forall v'\ (r_1(u_1,v_1)<v'<r_2(u_1,v_1) \wedge v'<v_1)\\
&\qquad
\rightarrow \epsilon'_1(u_1,v_1,v',u_2,v_2),\\
\epsilon_2(u_1,v_1,u_2,v_2) &= \forall v'\ (r_1(u_1,v_1)<v'<r_2(u_1,v_1) \wedge v'>v_1)\\
&\qquad 
\rightarrow \epsilon'_2(u_1,v_1,v',u_2,v_2) \text{ and }\\
\epsilon(u_1,v_1,u_2,v_2) &= \epsilon_1(u_1,v_1,u_2,v_2) \wedge \epsilon_2(u_1,v_1,u_2,v_2).
\end{align*}
We consider the set
$$L(u,v)=\{(u_2,v_2) \in T\;|\; \mathcal M \models \epsilon(u,\st(\delta(u,v)),u_2,v_2)\}$$ for any $(u,v) \in T$.
The set $L(u,v)$ is an $\mathcal M$-definable set for any fixed $(u,v) \in T$.
We consider two cases separately.
\medskip

\textbf{Case A.} There exists $(\widetilde{u}, \widetilde{v}) \in T$ such that $\dim L(\widetilde{u}, \widetilde{v})<d$.
\medskip

Fix $(\widetilde{u}, \widetilde{v}) \in T$ such that $\dim L(\widetilde{u}, \widetilde{v})<d$.
We consider the following three sets:
\begin{align*}
\mathfrak T_1(T,C,f) &=\{(u,v) \in T\;|\; \mathcal N \models ((u,v) \in L(\widetilde{u}, \widetilde{v}) \wedge f(u,v,\alpha)<\beta)\},\\
\mathfrak T_2(T,C,f) &=\{(u,v) \in T\;|\; \mathcal N \models (\neg \epsilon_1(\widetilde{u}, \st(\delta(\widetilde{u}, \widetilde{v})),u,v) \wedge f(u,v,\alpha)<\beta)\},\\
\mathfrak T_3(T,C,f) &=\{(u,v) \in T\;|\; \mathcal N \models (\neg \epsilon_2(\widetilde{u}, \st(\delta(\widetilde{u}, \widetilde{v})),u,v) \wedge f(u,v,\alpha)<\beta)\}.
\end{align*}
We obviously have $\mathfrak T(T,C,f)=\bigcup_{i=1}^3 \mathfrak T_i(T,C,f)$.
We have only to show that $\mathfrak T_i(T,C,f)$ is $\mathcal M$-definable for each $1 \leq i \leq 3$.
Since $\dim L(\widetilde{u}, \widetilde{v})<d$ by our case hypothesis, the set $\mathfrak T_1(T,C,f)$ is $\mathcal M$-definable by the induction hypothesis.

We next consider $\mathfrak T_2(T,C,f)$.
Fix an arbitrary $(u,v) \in T$ with $$\mathcal N \models  \neg \epsilon_1(\widetilde{u}, \st(\delta(\widetilde{u}, \widetilde{v})),u,v).$$
Note that the equalities $r_i(\widetilde{u}, \st (\delta(\widetilde{u}, \widetilde{v})))=r_i(\widetilde{u}, \widetilde{v})$ hold true for $i=1,2$ by the definition of $\delta$.
There exists $v' \in M$ such that $r_1(\widetilde{u}, \widetilde{v})<v'<r_2(\widetilde{u}, \widetilde{v})$, $v'<\st(\delta(\widetilde{u}, \widetilde{v}))$ and $f(u,v,\alpha) < f(\widetilde{u},v',\alpha)$.
We have $v'<\delta(\widetilde{u}, \widetilde{v})$ because $v' \in M$.
Since $f(u, \cdot, \alpha)$ is strictly increasing on $(r_1(\widetilde{u}, \widetilde{v}),r_2(\widetilde{u}, \widetilde{v}))$, we get $f(\widetilde{u},v',\alpha)<f(\widetilde{u},\delta(\widetilde{u}, \widetilde{v}),\alpha)=\beta$.
We finally obtain $f(u,v,\alpha)<\beta$.
Therefore, we get $\mathfrak T_2(T,C,f)=\{(u,v) \in T\;|\; \mathcal N \models \neg \epsilon_1(\widetilde{u}, \st(\delta(\widetilde{u}, \widetilde{v})),u,v)\}$, which is $\mathcal M$-definable by the induction hypothesis.
We can prove that, if $\mathcal N \models  \neg \epsilon_2(\widetilde{u}, \st(\delta(\widetilde{u}, \widetilde{v})),u,v)$, we have $f(u,v,\alpha)>\beta$ in the same manner.
The set $\mathfrak T_3(T,C,f)$ is an empty set.
We have demonstrated Lemma \ref{lem:main1} in Case A.
\medskip

\textbf{Case B.} The equality $\dim L(\widetilde{u}, \widetilde{v})=d$ holds true for each $(\widetilde{u}, \widetilde{v}) \in T$.
\medskip

We consider the $\mathcal M$-definable set $\Lambda$ defined by 
\begin{align*}
\Lambda=\{(u,v,v') \in T \times M\;|\; r_1(u,v)<v'<r_2(u,v) \wedge \\
\qquad\dim(\{(u_2,v_2) \in T\;|\; \mathcal M \models \epsilon(u,v',u_2,v_2)\}=d)\}.
\end{align*}
It is a definable set by the definition of dimension.
Our case hypothesis implies that $$(u,v,\st(\delta(u,v))) \in \Lambda$$ for each $(u,v) \in T$.
We demonstrate that the fiber $\Lambda_{(u,v)}$ of $\Lambda$ at $(u,v) \in T$ is of dimension zero.
Fix $(u,v) \in T$ for a while.
Consider the $\mathcal M$-definable set $$\mathfrak Z=\{(u_2,v_2,v') \in T \times M\;|\; \mathcal M \models  \epsilon(u,v',u_2,v_2) \wedge ((u,v,v') \in \Lambda)\}.$$
Let $q_1:M^{m+1} \rightarrow M^m$ and $q_2:M^{m+1} \rightarrow M$ be the coordinate projections onto first $m$ coordinates and onto the last coordinate, respectively.
The projection image $q_2(\mathfrak Z)$ coincides with $\Lambda_{(u,v)}$ and the fiber $\mathfrak Z \cap q_2^{-1}(v')$ is of dimension $d$ for any $v' \in \Lambda_{(u,v)}$ by the definition of the set $\Lambda$.
We get $$\dim \mathfrak Z = \dim \Lambda_{(u,v)} + d$$ by Proposition \ref{prop:dim}(8).
On the other hand, for any $(u_2,v_2,v') \in \mathfrak Z$, we obtain $\mathcal M \models \epsilon(u,v',u_2,v_2)$.
It implies that, for all $r_1(u,v')<v''<r_2(u,v')$, we get $f(u,v'',\alpha) \leq f(u_2,v_2,\alpha)$ if $v''<v'$ and $f(u,v'',\alpha) \geq f(u_2,v_2,\alpha)$ if $v''>v'$.
Since $f(u,\cdot,\alpha)$ is continuous on $(r_1(u,v'),r_2(u,v'))$, the equality $$f(u,v',\alpha)=f(u_2,v_2,\alpha)$$ holds true.
When we fix $(u_2,v_2) \in q_1(\mathfrak Z)$, at most one $v'$ satisfies the above equality because $f(u,\cdot,\alpha)$ is strictly increasing.
It means that the fiber $q_1^{-1}(u_2,v_2) \cap \mathfrak Z$ is a singleton.
We therefore get $$\dim \mathfrak Z = \dim q_1(\mathfrak Z) \leq \dim T = d$$ by Proposition \ref{prop:dim}(8).
We have demonstrated $\dim \Lambda_{(u,v)}=0$.
In particular, the fiber $\Lambda_{(u,v)}$ is discrete and closed by Proposition \ref{prop:dim}(3).

The point $\st(\delta(u,v))$ is the closest point in the $\mathcal M$-definable subset $$\{v' \in M\;|\; (u,v,v') \in \Lambda\}$$ of $M$ to $\delta(u,v)$ because $\st(\delta(u,v)) \in \Lambda_{(u,v)}$ and $\Lambda_{(u,v)}$ is discrete and closed for any $(u,v) \in T'$.
In other word, the set 
\begin{align*}
\Gamma &= \{(u,v,v') \in (T' \times M) \cap \Lambda\;|\; \mathcal N \models \forall v''\ ((u,v,v'') \in \Lambda\\ &\qquad \rightarrow |\delta(u,v)-v'| \leq |\delta(u,v)-v''|)\}\\
 &=\{(u,v,v') \in (T' \times M) \cap \Lambda\;|\; \mathcal N \models \forall v''\  \forall w
 \ (\Theta(u,v,w,\alpha,\beta) \\
 &\qquad \wedge (u,v,v'') \in \Lambda) \rightarrow (|w-v'| \leq |w-v''|)\}
\end{align*}
is the graph of the composition $\st \circ \delta$.
The definition of the formula $\Theta(u,v,v',w,z)$ is found in the equality (\ref{eqeq:theta}).
It is $\mathcal M$-definable by the induction hypothesis because $\dim (T' \times M) \cap \Lambda \leq \dim T'<d$ by Proposition \ref{prop:dim}(8).
We have demonstrated that the composition $\st \circ \delta$ is $\mathcal M$-definable.
\medskip

We consider the set 
\begin{align*}
Q_1&=\{(u,v) \in T'\;|\;\st(\delta(u,v))<\delta(u,v)\}\\
&=\{(u,v) \in T'\;|\; \mathcal N \models \forall v',\ \Theta(u,v,v',\alpha,\beta) \wedge (\st(\delta(u,v))<v')\}.
\end{align*}
It is $\mathcal M$-definable because Theorem \ref{thm:main1} holds true for $T'$.
The set 
\begin{align*}
P_1=\{(u,v) \in T\;|\; \st(\delta(\rho(u,v)))<\delta(\rho(u,v))\}=\rho^{-1}(Q_1)
\end{align*}
is also $\mathcal M$-definable.
The set 
\begin{align*}
P_2=\{(u,v) \in T\;|\; \st(\delta(\rho(u,v))) \geq \delta(\rho(u,v))\}
\end{align*}
is also $\mathcal M$-definable for the same reason.
We then have
\begin{align*}
\mathfrak T(T,C,f)&=\{(u,v) \in T\;|\; f(u,v,\alpha)<\beta\}\\
&=\{(u,v) \in P_1\;|\; r_1(u,v)<v \leq \st(\delta(\rho(u,v)))\} \\
&\qquad \cup \{(u,v) \in P_2\;|\; r_1(u,v)<v < \st(\delta(\rho(u,v)))\}
\end{align*}
because $f(u,\cdot,\alpha)$ is strictly increasing on $(r_1(u,v),r_2(u,v))$.
Therefore, $\mathfrak T(T,C,f)$ is $\mathcal M$-definable because the composition $\st \circ \delta$ is $\mathcal M$-definable.
We have demonstrated Lemma \ref{lem:main1}.
\end{proof}

\section{Corollaries of the main theorem}\label{sec:corollaries}
Using Theorem \ref{thm:main1}, we can get the following corollaries in the same manner as \cite{vdD2}.
Let $\dim_{\mathcal M}S$ denote the dimension of an $\mathcal M$-definable set $S$.
We also define $\dim_{\mathcal N}S$ in the same manner.

\begin{corollary}\label{cor:main1_1}
Let $\mathcal L$, $\mathcal M$ and $\mathcal N$ be as in Theorem \ref{thm:main1}.
For any $\mathcal N$-definable subset $S$ of $N^n$ parameterized by $\mathcal M$-bounded parameters, we have $\dim_{\mathcal N}S \geq \dim_{\mathcal M} S \cap M^n.$
\end{corollary}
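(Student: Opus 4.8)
The plan is to reduce the corollary, via Theorem \ref{thm:main1}, to the following claim, which I would prove by induction on $d\ge 1$: \emph{if $A\subseteq N^d$ is $\mathcal N$-definable with $\mathcal M$-bounded parameters, contained in a box $\prod_{i=1}^{d}(a_i,b_i)^{\mathcal N}$ with $a_i,b_i\in M$, and contains a box $\prod_{i=1}^{d}(a'_i,b'_i)^{\mathcal M}$ with $a'_i<b'_i$ in $M$, then $A$ has nonempty interior in $N^d$.} Granting this, the reduction runs as follows. By Theorem \ref{thm:main1} the set $S\cap M^n$ is $\mathcal M$-definable, so $d:=\dim_{\mathcal M}(S\cap M^n)$ is defined; the cases $d\le 0$ are trivial (if $S\cap M^n\neq\emptyset$ then $S\neq\emptyset$), so assume $d\ge 1$ and fix a coordinate projection $\pi\colon N^n\to N^d$ with $\pi(S\cap M^n)$ of nonempty interior in $M^d$. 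Choose an $\mathcal M$-box $\prod_{i=1}^{d}(a'_i,b'_i)^{\mathcal M}$ inside that interior; it lies in $\pi(S\cap M^n)\subseteq\pi(S)$, so $A:=\pi(S)\cap\prod_{i=1}^{d}(a'_i,b'_i)^{\mathcal N}$ satisfies the hypotheses of the claim. The claim gives that $A$, hence $\pi(S)$, has nonempty interior in $N^d$, and by the definition of dimension this means $\dim_{\mathcal N}S\ge d$.

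Throughout the proof of the claim I use the relevant consequences of Theorem \ref{thm:uldcd} and Proposition \ref{prop:dim} \emph{over $\mathcal N$, for $\mathcal N$-definable sets lying in a fixed $\mathcal M$-box}. Once a bounding $\mathcal M$-box is fixed, the statements "a definable set is a union of at most $k$ cells of prescribed types", "a zero-dimensional definable subset of $N$ has at most $k$ points", "$\dim(\mathcal D(f))<\dim(\text{domain of }f)$", and "a definable surjection admits a definable section" become first-order; they hold over $\mathcal M$ and therefore transfer along $\mathcal M\preceq\mathcal N$. For the base case $d=1$: since $A$ contains the infinite set $(a'_1,b'_1)^{\mathcal M}$, it is not finite; being an $\mathcal M$-bounded $\mathcal N$-definable subset of $N$ that is not finite, it cannot be zero-dimensional (by the transferred cardinality bound), so $\dim_{\mathcal N}A=1$, i.e. $A$ has nonempty interior in $N$.

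For the inductive step $d>1$: for each $x\in\prod_{i=1}^{d-1}(a'_i,b'_i)^{\mathcal M}$ the fibre $A_x=\{y\in N\mid(x,y)\in A\}$ is $\mathcal N$-definable with $\mathcal M$-bounded parameters, contained in $(a_d,b_d)^{\mathcal N}$, and contains $(a'_d,b'_d)^{\mathcal M}$, so by the base case it has nonempty interior in $N$. Hence $E:=\{x\in N^{d-1}\mid A_x$ has nonempty interior in $N\}$ is $\mathcal N$-definable, contained in $\prod_{i=1}^{d-1}(a_i,b_i)^{\mathcal N}$, and contains $\prod_{i=1}^{d-1}(a'_i,b'_i)^{\mathcal M}$; the induction hypothesis applied to $E$ shows $E$ has nonempty interior in $N^{d-1}$, so $E$ contains an open box $W_0\subseteq\prod_{i=1}^{d-1}(a_i,b_i)^{\mathcal N}$. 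Applying the transferred definable-section principle to $\{(x,s,t)\in W_0\times N^2\mid s<t,\ (s,t)^{\mathcal N}\subseteq A_x\}$ yields $\mathcal N$-definable $f,g\colon W_0\to(a_d,b_d)^{\mathcal N}$ with $f<g$ and $(f(x),g(x))^{\mathcal N}\subseteq A_x$; by the transferred bound on discontinuity loci, the set where $f$ or $g$ is discontinuous has dimension below $\dim W_0=d-1$, hence empty interior, so I may pick $x_0\in W_0$ at which both $f$ and $g$ are continuous. Choosing $y_1<y_2$ in $(f(x_0),g(x_0))^{\mathcal N}$, continuity provides an open box $W$ with $x_0\in W\subseteq W_0$ and $f(x)<y_1$, $y_2<g(x)$ for all $x\in W$, whence $W\times(y_1,y_2)^{\mathcal N}\subseteq A$. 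Thus $A$ has nonempty interior in $N^d$, completing the induction.

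The main obstacle is exactly the point flagged above: one must check that the pieces of structure theory invoked over $\mathcal N$ are genuinely first-order once a bounding $\mathcal M$-box is fixed, and that every set produced along the way — $A$, its fibres, $E$, $W_0$, the section $(f,g)$ — stays inside an $\mathcal M$-box and keeps $\mathcal M$-bounded parameters, so that the transferred statements are legitimately applicable. This bookkeeping is the reason the reduction intersects with an $\mathcal M$-box at the outset and the induction is organized through fibres over points of $M^{d-1}$; it is also the place where the present almost o-minimal setting differs from the o-minimal situation of \cite{vdD2}, in which $\mathcal N$ is itself o-minimal and no such confinement to $\mathcal M$-bounded sets is needed.
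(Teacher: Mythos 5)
Your argument is correct in outline, but it takes a genuinely different route from the paper's proof, and it proves the inequality from the opposite side. The paper argues by induction on $(n,\dim_{\mathcal N}S)$ in the lexicographic order: the case $\dim_{\mathcal N}S=0$ is handled by ``dimension zero iff discrete and closed'' (Proposition \ref{prop:dim}(3)), so that the trace $S\cap M^n$ is discrete, and the inductive step splits $S$ into the part $S_1$ whose fibers over a suitably chosen coordinate projection are positive-dimensional and the part $S_2$ with zero-dimensional fibers, then combines the fiber-dimension formula (Proposition \ref{prop:dim}(8)) with (4); in other words it bounds $\dim_{\mathcal M}(S\cap M^n)$ from above by $\dim_{\mathcal N}S$. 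You instead bound $\dim_{\mathcal N}S$ from below by a box-lifting claim (an $\mathcal N$-definable set with $\mathcal M$-bounded parameters, confined to an $\mathcal M$-box and containing all $M$-points of an $M$-box, must have interior in $N^d$), proved by induction on the ambient dimension $d$ using elementarity to transfer, formula by formula, the one-variable uniform finiteness coming from Theorem \ref{thm:uldcd}, a uniform definable choice function, and generic continuity (Proposition \ref{prop:dim}(6)). What each approach buys: the paper's proof is shorter, but it silently applies the dimension calculus of Proposition \ref{prop:dim} to $\mathcal N$-definable sets, which itself requires exactly the kind of family-wise transfer you make explicit; your proof isolates that transfer as the only nontrivial input, at the cost of the bookkeeping you flag. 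One caution on that bookkeeping: as stated, ``a definable surjection admits a definable section'' and ``a zero-dimensional definable subset of $N$ has at most $k$ points'' are schemes, not single sentences; what legitimately transfers is, for each fixed formula, that a specific uniformly defined choice formula is a section (available here because a definably complete ordered group admits uniform definable choice, as in the paper's own construction of $\rho$, $r_1$, $r_2$) and that the bound $k$ produced by Theorem \ref{thm:uldcd} for that formula and that $\mathcal M$-box works; with that per-formula reading, your induction goes through and yields Corollary \ref{cor:main1_1}.
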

\begin{proof}
We prove the lemma by induction on $(n,k=\dim_{\mathcal N}(S))$ under the lexicographic order.
Set $T=S \cap M^n$.
When $k=0$, the set $S$ is discrete and closed by Proposition \ref{prop:dim}(3).
Since $T$ is a subset of $S$, it is discrete or an empty set.
Therefore, we have $\dim_{\mathcal M}(T) \leq 0$ by Proposition \ref{prop:dim}(3).

The lemma is obvious when $n=k$.
The lemma has been demonstrated when $n=1$.

We consider the case in which $k>0$.
Let $\mathcal P_k$ be the set of all coordinate projections from $N^n$ onto $N^k$.
It is a finite set and we fix a linear order on the set $\mathcal P_k$.
The image of $S$ under some coordinate projection in $\mathcal P_k$ has a nonempty interior by the definition of dimension.
Let $\pi_S:N^n \rightarrow N^k$ be the largest element in $\mathcal P_k$ under which  the image of $S$ has a nonempty interior.
The same notation $\pi_S$ also denotes the coordinate projection $M^n \rightarrow M^k$.
This abuse of notations will not confuse the readers.
Set $S_1=\{x \in S\;|\; \dim_{\mathcal N}(S \cap \pi_S^{-1}(\pi_S(x)))>0\}$ and $T_1=S_1 \cap M^n$.
Either $S_1$ is of dimension smaller than $k$ or $\pi_{S_1}$ is smaller than $\pi_{S}$ in $\mathcal P_k$.
We have $\dim_{\mathcal N} S_1 \geq \dim_{\mathcal M} T_1$ by the induction hypothesis.

Set $S_2= S \setminus S_1$.
By the definition of $S_2$, we have $\dim_{\mathcal N} S_2 \cap \pi_S^{-1}(\pi_S(x)) =0$ for all $x \in S_2$ by the definition of $S_2$.
Set $T_2=S_2 \cap M^n$.
We immediately get $\dim_{\mathcal M} T_2 \cap \pi_S^{-1}(\pi_S(x))  \leq 0$ for all $x \in S_2$ for the same reason as the case in which $k=0$.
Since $\pi_S(T_2) \subseteq \pi_S(S_2) \cap M^{n-1}$ and $\dim_{\mathcal N} \pi_S(S_2)=\dim_{\mathcal N} S_2 \leq k$ by Proposition \ref{prop:dim}(8), we get $\dim_{\mathcal M} \pi_S(T_2)  \leq \dim_{\mathcal M} \pi_S(S_2) \cap M^{n-1} \leq k$ by the induction hypothesis.
We get $\dim_{\mathcal M} T_2\leq k$ by Proposition \ref{prop:dim}(8).
We immediately get $\dim_{\mathcal M}T =\max \{\dim_{\mathcal M}T_1,\dim_{\mathcal M}T_2\} \leq k$ by Proposition \ref{prop:dim}(4).
\end{proof}

\begin{remark}
The inequality in Corollary \ref{cor:main1_1} may be strict.
For instance, consider a singleton $S$ defined by an $\mathcal M$-bounded element in $N \setminus M$.
We obviously have $S \cap M=\emptyset$ and $0=\dim_{\mathcal N}S > \dim_{\mathcal M}(M \cap S)=-\infty$.
\end{remark}

\begin{corollary}\label{cor:main1_2}
Let $\mathcal L$, $\mathcal M$ and $\mathcal N$ be as in Theorem \ref{thm:main1}.
Let $\mathcal V$ be the set of $\mathcal M$-bounded elements in $N$.
Consider an $\mathcal N$-definable subset $S$ of $N^n$ parameterized by $\mathcal M$-bounded parameters.
The set $\st(S \cap \mathcal V^n)$ is $\mathcal M$-definable.
\end{corollary}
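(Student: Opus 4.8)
The plan is to reduce the corollary to Theorem \ref{thm:main1} by means of an auxiliary formula that detects, for a point $x\in M^n$, whether $S$ meets every box of standard radius centred at $x$, and then to dispose separately of the degenerate situation in which $\mathcal N$ possesses no nonzero infinitesimal.

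First I would write $S=\{y\in N^n\;|\;\mathcal N\models\phi(y,a)\}$ with $\phi$ an $\mathcal L(M)$-formula and $a\in N^p$ an $\mathcal M$-bounded tuple, and consider the $\mathcal L(M)$-formula
$$
\Phi\bigl((x,t),y\bigr)=\exists z\ \Bigl(\phi(z,y)\wedge\bigwedge_{i=1}^{n}\bigl(z_i<x_i+t\wedge x_i<z_i+t\bigr)\Bigr)
$$
in the free variables $(x,t)\in M^{n+1}$ and $y$. Applying Theorem \ref{thm:main1} to $\Phi$ and the $\mathcal M$-bounded tuple $a$ shows that $W=\{(x,t)\in M^{n+1}\;|\;\mathcal N\models\Phi((x,t),a)\}$ is $\mathcal M$-definable, and for $(x,t)\in M^{n+1}$ the condition $(x,t)\in W$ says exactly that $S$ meets the open box of $\mathcal N$-radius $t$ about $x$. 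I then claim that $\st(S\cap\mathcal V^n)$ equals $S\cap M^n$ when $\mathcal N$ has no positive infinitesimal, and equals $\{x\in M^n\;|\;\forall t\in M\ (t>0\rightarrow(x,t)\in W)\}$ otherwise. In the first case the set is $\mathcal M$-definable directly by Theorem \ref{thm:main1}; in the second it is $\mathcal M$-definable because $W$ is; so the corollary follows once this equality is checked.

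To verify the equality I would argue both inclusions, the non-degenerate case being the substantial one. For one inclusion, if $x=\st(y)$ with $y\in S\cap\mathcal V^n$, then $|y_i-x_i|<t$ for every positive $t\in M$ (by tameness, since $x_i,t\in M$), so $y$ witnesses $(x,t)\in W$ for all such $t$; and in the degenerate case every $\mathcal M$-bounded element lies in $M$, whence $y\in M^n$ and $x=y\in S\cap M^n$. For the reverse inclusion in the non-degenerate case, fix a positive infinitesimal $\eta\in N$ and $x\in M^n$ with $(x,t)\in W$ for all positive $t\in M$. The set $U=\{t\in N\;|\;t>0,\ S\cap B(x,t)\neq\emptyset\}$, where $B(x,t)$ denotes the corresponding $\mathcal N$-box, is $\mathcal N$-definable, upward closed, and contains every positive element of $M$. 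Since $\mathcal M$ is definably complete (recalled in Section \ref{sec:preliminary}) and $\mathcal N$ is an elementary extension of $\mathcal M$, the structure $\mathcal N$ is definably complete as well, so $\sigma=\inf U$ exists in $N$; it is nonnegative and bounded above by every positive element of $M$, hence infinitesimal, and $(\sigma,+\infty)\subseteq U$ by upward closedness. Therefore $\sigma+\eta\in U$; as $\sigma+\eta$ is infinitesimal, there is $y\in S$ with $x_i<y_i+\sigma+\eta$ and $y_i<x_i+\sigma+\eta$ for every $i$, so each $y_i$ is $\mathcal M$-bounded with $\st(y_i)=x_i$, giving $x=\st(y)\in\st(S\cap\mathcal V^n)$. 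The degenerate case of this inclusion is immediate from $\mathcal V=M$.

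The main obstacle will be precisely this last step: passing from the soft information that $S$ meets every standard-radius box about $x$ to an honest point of $S$ with standard part $x$. Because $\mathcal N$ need not be saturated, a compactness argument is unavailable, and the remedy is the infimum computation above, which rests on the transfer of definable completeness from $\mathcal M$ to $\mathcal N$. The split into the two cases is a genuine necessity rather than a cosmetic one: when $\mathcal V=M$ there are no infinitesimals, "infinitely close" degenerates to "equal", and the box characterisation would overshoot $S\cap M^n$ by part of its frontier, so the description valid in the generic case genuinely fails there.
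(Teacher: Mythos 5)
Your argument is correct, and its skeleton is the same as the paper's: apply Theorem \ref{thm:main1} to a formula saying that $S$ meets a box with standard data around $x$, then describe $\st(S\cap\mathcal V^n)$ inside $\mathcal M$ by quantifying over all such standard boxes. The paper does this with the set $T=\{(x,y)\in M^n\times M^n\;|\;\mathcal N\models\exists z\in S,\ x_i<z_i<y_i \text{ for all } i\}$ and then simply asserts, as obvious, the identity between $\st(S\cap\mathcal V^n)$ and the set of $z\in M^n$ all of whose standard boxes meet $S$. What you do differently is to actually prove the nontrivial inclusion of that identity --- via the definable, upward closed set $U$ of admissible radii, the transfer of definable completeness from $\mathcal M$ to its elementary extension $\mathcal N$, and a positive infinitesimal $\eta$ used to land inside the monad of $x$ --- and to split off the degenerate case in which $N$ has no positive $\mathcal M$-infinitesimal, where $\mathcal V=M$ and the box description must be replaced by $S\cap M^n$ (itself $\mathcal M$-definable by Theorem \ref{thm:main1}). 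This extra care is not wasted: the identity the paper calls obvious is, as stated, false precisely in your degenerate case (already for $\mathcal N=\mathcal M$ and $S$ an open interval, or for a tame extension all of whose new elements are $\mathcal M$-unbounded, the box condition picks up frontier points of $S\cap M^n$), so your case split plus the infimum computation supplies exactly the verification the paper's one-line justification skips, at the modest cost of invoking definable completeness of $\mathcal N$ and density of the group to see that a sum of infinitesimals is infinitesimal --- both available in this setting.
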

\begin{proof}
Let $x$, $y$ and $z$ denote $n$-tuples of elements in a set, and the notations $x_i$, $y_i$ and $z_i$ denote the $i$-th element, respectively.
Consider the set $T=\{(x,y) \in M^n \times M^n\;|\; \mathcal N \models \exists z \in S,\ x_i<z_i<y_i\text{ for all } 1 \leq i \leq n\}$.
It is $\mathcal M$-definable by Theorem \ref{thm:main1}.
We obviously have
\begin{align*}
\st(S \cap \mathcal V^n) &=\{z \in M^n\;|\; \mathcal M \models (\forall x_1,\ldots, \forall x_n,\forall y_1,\ldots, \forall y_n, \\
&(x_1<z_1<y_1)\wedge \ldots\wedge(x_n<z_n<y_n) \rightarrow (x,y) \in T)\}.
\end{align*} 
It means that $\st(S \cap \mathcal V^n)$ is $\mathcal M$-definable.
\end{proof}

\begin{corollary}\label{cor:main1_3}
Let $\mathcal L$, $\mathcal M$ and $\mathcal N$ be as in Theorem \ref{thm:main1}.
Consider a $\mathcal N$-definable function $f:N^n \rightarrow N$ parameterized by $\mathcal M$-bounded parameters.
The three sets
\begin{align*}
&D_{-\infty} =\{x \in M^n\;|\; f(x) < y \ (\forall y \in M)\},\\
&D_{\infty} =\{x \in M^n\;|\; f(x) > y \ (\forall y \in M)\} \text{ and }\\
&D=M^n \setminus (D_{-\infty} \cup D_{\infty})
\end{align*}
and the map $g:D \rightarrow M$ given by $x \mapsto \st(f(x))$ are all $\mathcal M$-definable.
\end{corollary}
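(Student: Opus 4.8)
The plan is to obtain all four assertions from Theorem \ref{thm:main1} by pulling back suitable first-order conditions to $M^n$ and then combining the resulting $\mathcal M$-definable sets by means of complements and quantification over $M$. Fix an $\mathcal L(M)$-formula $\Phi_f(x,v,y)$ and an $\mathcal M$-bounded tuple $a$ such that $\mathcal N\models\Phi_f(x,v,a)$ holds exactly when $v=f(x)$. Every $\mathcal L$-formula used below will carry only $\mathcal M$-bounded parameters, because its parameters are taken from $a$ while each auxiliary variable is constrained to range over $M$.

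First I would treat $D_{-\infty}$, $D_\infty$ and $D$. Let $\Psi_{<}(x,w,y)$ be the formula $\exists v\,(\Phi_f(x,v,y)\wedge v<w)$, which expresses $f(x)<w$. By Theorem \ref{thm:main1}, the set $A_{<}=\{(x,w)\in M^{n+1}\;|\;\mathcal N\models\Psi_{<}(x,w,a)\}$ is $\mathcal M$-definable. Writing $\pi:M^{n+1}\to M^n$ for the projection onto the first $n$ coordinates, one has $D_{-\infty}=M^n\setminus\pi(M^{n+1}\setminus A_{<})$, so $D_{-\infty}$ is $\mathcal M$-definable. Replacing $v<w$ by $v>w$ shows in the same way that $D_\infty$ is $\mathcal M$-definable, and hence so is $D=M^n\setminus(D_{-\infty}\cup D_\infty)$. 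Note that $x\in D$ exactly when $f(x)$ lies between two elements of $M$, that is, when $f(x)$ is $\mathcal M$-bounded, so $\st(f(x))$ is defined and the map $g$ makes sense on $D$.

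For $g$ I would use the order-theoretic description of the standard part: for an $\mathcal M$-bounded $\eta\in N$ and $z\in M$, one has $z=\st_{\mathcal M}(\eta)$ if and only if, for every $w\in M$, $w<z$ implies $w<\eta$ and $z<w$ implies $\eta<w$. This follows directly from the three alternatives in the definition of a tame extension together with the density of $(M,<)$. I would then apply Theorem \ref{thm:main1} to the $\mathcal L(M)$-formula
\[\Xi(x,z,w,y)\ \equiv\ \exists v\,\bigl(\Phi_f(x,v,y)\wedge(w<z\to w<v)\wedge(z<w\to v<w)\bigr),\]
which expresses $(w<z\to w<f(x))\wedge(z<w\to f(x)<w)$, to obtain an $\mathcal M$-definable set $R\subseteq M^{n+2}$. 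By the description above, the graph of $g$ equals $(D\times M)\cap\{(x,z)\in M^{n+1}\;|\;\forall w\in M,\ (x,z,w)\in R\}$, which is $\mathcal M$-definable; hence $g$ is $\mathcal M$-definable.

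I do not expect any genuine obstacle once Theorem \ref{thm:main1} is in hand; the argument is essentially bookkeeping. The two points that require a little care are checking that each formula invoked really carries only $\mathcal M$-bounded parameters — which holds because its parameters come from $a$ and its remaining variables run over $M$ — and verifying the order-theoretic characterization of $\st_{\mathcal M}$ stated above.
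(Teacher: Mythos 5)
Your proposal is correct and follows essentially the same route as the paper: apply Theorem \ref{thm:main1} to the $\mathcal M$-definable truth sets of ``$f(x)<w$'' (for $D_{-\infty}$, $D_\infty$, $D$) and of a neighborhood-style characterization of the standard part (for $g$), then quantify over $M$. The only cosmetic difference is that you characterize $\st(f(x))$ by the two one-sided implications while the paper uses the equivalent two-sided condition $y_1<y<y_2\rightarrow y_1<f(x)<y_2$.
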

\begin{proof}
Consider the set $X=\{(x,y) \in M^n \times M\;|\; f(x)<y\}$.
It is $\mathcal M$-definable by Theorem \ref{thm:main1}.
We have $D_{-\infty}=\{x \in M^n\;|\; \forall y\  (x,y) \in X\}$.
It is obviously $\mathcal M$-definable.
We can show that $D_{\infty}$ is $\mathcal M$-definable, similarly.
The $\mathcal M$-definability of $D$ is now trivial.

We next consider the set $Y=\{(x,y_1,y_2) \in M^n \times M \times M\;|\; y_1<f(x)<y_2\}$.
It is $\mathcal M$-definable by Theorem \ref{thm:main1}.
The graph of $g$ is given by
$\{(x,y) \in M^n \times M\;|\; \mathcal M \models \forall y_1, \forall y_2, y_1<y<y_2 \rightarrow (x,y_1,y_2) \in Y\}$.
We have shown that $g$ is $\mathcal M$-definable.
\end{proof}

\end{document}